\numberwithin{equation}{section} 
\newtheorem{Definition}{Definition}[section]
\newtheorem{Lemma}{Lemma}[section]
\newtheorem{Proposition}{Proposition}[section]
\newtheorem{Theorem}{Theorem}[section]
\newcommand{\vt}[1]{{\bf #1}}
\newcommand{\x}{\vt{x}}
\newcommand{\w}{\vt{w}}
\newcommand{\y}{\vt{y}}
\newcommand{\p}{\vt{p}}
\newcommand{\dd}{\vt{d}}
\newcommand{\Ten}{\mathcal{T}}
\newcommand{\REAL}{\mathbb{R}}
\newcommand{\Adj}{\mathcal{A}}
\newcommand{\R}{\mathbb{R}}
\newcommand{\g}{\nabla f}
\newcommand{\T}{\top}
{\par\addvspace{\baselineskip}}
{\par\addvspace{\baselineskip}}
\title{Computing the $p$-Spectral Radii of Uniform Hypergraphs with Applications}
\author{
  {Jingya Chang}\thanks{%
    School of Mathematics and Statistics, Zhengzhou University, Zhengzhou 450001, China and
    Department of Applied Mathematics, The Hong Kong Polytechnic University,
    Hung Hom, Kowloon, Hong Kong
    ({\tt jychang@zzu.edu.cn}). This author's work was partially supported by the National Natural Science Foundation of China (grant No. 11401539 and 11571178)}
    \and
{Weiyang Ding}\thanks{ Department of Applied Mathematics, The Hong Kong Polytechnic University,
    Hung Hom, Kowloon, Hong Kong ({\tt weiyang.ding@gmail.com}). This author's work was partially supported by the Hong Kong Research Grant Council
    (Grant No. C1007-15G).}
  \and
  {Liqun Qi}\thanks{%
    Department of Applied Mathematics, The Hong Kong Polytechnic University,
    Hung Hom, Kowloon, Hong Kong ({\tt maqilq@polyu.edu.hk}).
    This author's work was partially supported by the Hong Kong Research Grant Council
    (Grant No. PolyU 501913, 15302114, 15300715, 15301716 and C1007-15G).}
    \and
      {Hong Yan}\thanks{%
  Department of Electronic Engineering, City University of Hong Kong, Kowloon, Hong Kong  ({\tt h.yan@cityu.edu.hk}).
  This author's work was partially supported by the Hong Kong Research Grants Council (Grant No. C1007-15G).}
}
\begin{document}

\maketitle

\begin{abstract}

The $p$-spectral radius of a uniform hypergraph covers many important concepts,
such as Lagrangian and spectral radius of the hypergraph,
and is crucial for solving spectral extremal problems of hypergraphs.
In this paper, we establish a spherically constrained maximization model and
propose a first-order conjugate gradient algorithm to
compute the $p$-spectral radius of a uniform hypergraph (CSRH).
By the semialgebraic nature of the adjacency tensor of a uniform hypergraph,
CSRH is globally convergent and obtains the global maximizer with a high probability.
When computing the spectral radius of the adjacency tensor of a uniform hypergraph,
CSRH stands out among  existing approaches.
Furthermore, CSRH is competent to calculate the $p$-spectral radius of a hypergraph with millions of vertices
and to approximate the Lagrangian of a hypergraph.
Finally, we show that the CSRH method is capable of ranking real-world data set
based on solutions generated by the $p$-spectral radius model.
\end{abstract}

\begin{keywords}
  \, Eigenvalue, hypergraph,
  large scale tensor, network analysis, pagerank, $p$-spectral radius.
\end{keywords}

\begin{AMS}
  \, 05C65, 15A18, 15A69, 65F15, 65K05, 90C35, 90C53
\end{AMS}

\pagestyle{myheadings}
\thispagestyle{plain}
\markboth{JINGYA CHANG, WEIYANG DING, AND LIQUN QI}{COMPUTING P-SPECTRAL RADII OF HYPERGRAPHS}

\section{Introduction}

With the emergence of big data  in various field of our social life, it becomes  significant and challenging to analyze the massive data  and extract valuable information from them. Hypergraph, as an extension of graph, provides an efficient way to represent complex relationships among objects  in applied science, such as chemistry \cite{konstantinova2001application,klamt2009hypergraphs}, computer science \cite{gunopulos1997data,pliakos2015weight,karypis1999multilevel}, and image processing \cite{bretto2005hypergraph,ducournau2012reductive,chen2016fiber}. The spectral hypergraph theory has been widely studied in \cite{cooper2012spectra,kang2015p,keevash2014spectral,lu2016connected,pearson2014spectral,xie2013z,yue2016largest}, which reveal   combinatorial and geometric structures of hypergraphs.  Moreover, spectral hypergraph approaches are useful tools to  address issues in real world. Spectral hypergraph  partitioning and spectral hypergraph  clustering have broad applications in network analysis \cite{michoel2012alignment,rohe2011spectral}, image segmentation \cite{ducournau2009multilevel}, multi-label classification \cite{sun2008hypergraph}, machine learning \cite{zhou2006learning}, and data analysis \cite{agarwal2005beyond,li2014context}. Hypergraph spectral hashing techniques highly contribute to problems of similarity search and retrieval of social image \cite{zhuang2011hypergraph,liu2013hypergraph}.

In this paper, we focus on the computation of  $p$-spectral radii of uniform hypergraphs.
The $p$-spectral radius of a hypergraph was introduced in \cite{keevash2014spectral} and linked with extremal hypergraph problems.
Extremal graph theory, as a branch of graph theory, is one of the most  attractive and best studied area in combinatorics.
Tur\'{a}n \cite{turan1941eine} introduced the famous Tur\'{a}n graph   and  Tur\'{a}n  theorem in 1941, when  is regarded as the start of the extremal graph theory. Naturally, the question was extended from graph to hypergraph in \cite{turan1961research} that is to find the largest number of edges in a hypergraph which is $F$-free\footnote{ A uniform hypergraph that does not have a subgraph isomorphic to the uniform  hypergraph $F$ is said to be $F$-free.}. Although the Tur\'{a}n-type problem is adequately complete for ordinary graphs, cases are much more challenging when it comes to hypergraph.
In \cite{nikiforov2007bounds}, Nikiforov proved the spectral Tur\'{a}n-type inequality
which generalized the Tur\'{a}n theorem.
In \cite{keevash2014spectral}, the $p$-spectral  version of Nikiforov's inequality  and the $p$-spectral  version of a hypergraph Tur\'{a}n result were given, and it was showed that this  result can be employed in solving `degenerate' Tur\'{a}n-type problems. Furthermore, it was proved that the edge extremal problems  are asymptotically equivalent to the extremal $p$-spectral radius problems  in \cite{nikiforov2013analytic}.


The $p$-spectral radius of a hypergraph covers not only the number of edges in extremal problems, but also  the notions, such as  Lagrangian, and  the spectral radius of a hypergraph \cite{lu2016connected}.
When  $p=1$, the $p$-spectral radius of a hypergraph turns out to be its Lagrangian. The Lagrangians of graph and hypergraph were proposed in \cite{motzkin1965maxima} to prove the Tur\'{a}n's theorem for graphs. The Largrangians of hypergraphs were used   to disprove the conjecture of Erd\"{o}s  \cite{erdos1946structure,frankl1984hypergraphs} and to  find non-jumping numbers for hypergraphs \cite{frankl2007note,peng2008using,peng2008generating}.  Also, the Lagrangian of a hypergraph is associated with problems of determining Tur\'{a}n densities of hypergraphs \cite{brown1984digraph,keevash2011hypergraph,mubayi2006hypergraph,sidorenko1987maximal}, which is an asymptotic solution to a (non-degenerate) Tur\'{a}n problem.
When $p=2$, the $p$-spectral radius of a uniform hypergraph  is the largest Z-eigenvalue \cite{qi2005eigenvalues} of its adjacency tensor. When $p$ is even and equals the order of this hypergraph, the $p$-spectral radius  becomes the largest  H-eigenvalue  of the adjacency tensor of $G$. Therefore, the $p$-spectral radius is connected with the (adjacency) spectral radius of a hypergraph \cite{hu2015laplacian,li2015extremal,lu2016connected}. Additionally, \citeauthor{kang2015p} provided solutions to several $p$-spectral radius related extremal problems in \cite{kang2015p}. Nikiforov in \cite{nikiforov2014analytic} did a comprehensive study  and obtained many  theoretical conclusions about $p$-spectral radius.

Apart from the application in extremal hypergraph theory, the $p$-spectral radius model constructs a framework to quantify the importance of objects or centrality in networks. Evaluating the significance or popularity of objects is a significant problem in data mining. It can be used to determine the importance of web pages \cite{page1999pagerank,kolda2005higher,ding2002pagerank}, forecast customer behaviour \cite{krohn2012multi}, retrieve images \cite{huang2010image} and so on. In  the $p$-spectral radius model, entries of the vector associated with the $p$-spectral radius of a hypergraph are called $p$-optimal weighting and represent the significance of its corresponding vertices. The ranking result varies when $p$  changes. We will explain the meaning of different ranking results and show the numerical performance of our algorithm in sorting real-life data in Section 6.

Calculation of $p$-spectral radii of hypergraphs is related to several methods for evaluating tensor eigenvalues.  Algorithms for tensor eigenvalues, such as the shifted symmetric higher-order power method (SS-HOPM) in \cite{kolda2011shifted}, the generalized eigenproblem adaptive power (GEAP) method in \cite{kolda2014adaptive}, an extension of Collatz's method (NQZ)  in \cite{ng2009finding}, and the CEST  method, can be  employed when $p$ equals 2 or when $p$ equals the  order of an even-uniform hypergraph. When $p$ is even, the $p$-spectral radius problem is equivalent to the  generalized tensor eigenvalue problem  \cite{chang2008perron, ding2015fast}. Therefore,  methods for this generalized tensor eigenvalue problem, such as the polynomial optimization related algorithm for finding all real eigenvalues of a symmetric tensor given by \citeauthor{cui2014all} in \cite{cui2014all}, and the  homotopy approach for all eigenpairs of general real or complex tensors proposed by \citeauthor{chen2016computing} in \cite{chen2016computing} can be employed to compute even $p$-spectral radius of  small scale hypergraphs.
However,  the problem of computing $p$-spectral radii of  arbitrary hypergraph is still open. This is the main motivation of our paper.

  To solve the $p$-spectral radius problem, we introduce a spherically constrained maximization model, which is equivalent to the original problem. Then we use an effective conjugate gradient method  to acquire an ascent direction for the constrained optimization model. Next, we employ the Cayley transform to project the ascent direction on the unit sphere. It is proved that there exists a positive parameter in the curvilinear line search such that the Wolfe conditions hold. Based on the above foundation, we propose a numerical method for computing $p$-spectral radii of hypergraphs (CSRH) with $p>1.$  When $p=1,$ the CSRH method is able to approximate the $1$-spectral radii (Largrangians) of hypergraphs. In the convergence analysis, we prove that the CSRH algorithm is  convergent and  it converges to the global optimization point with high probability.  Numerical experiments show that CSRH  is preponderant when compared to  existing methods  for computing Z-eigenvalues and H-eigenvalues of adjacency tensors.  Moreover, CSRH  is capable of calculating $p$-spectral radii of hypergraphs with millions of vertices effectively. In addition, we find that the significance of vertices of hypergraphs is related to the order of elements of the $p$-optimal weighting. Therefore, we apply the CSRH method to rank the vertices of the corresponding hypergraph  from different viewpoints when $p$ is different, which is useful in network analysis. As an example,  we show that our numerical results  agree with the observed data of a small weighted hypergraph. Furthermore, we successfully rank 10305 authors based on their  publication information  by establishing a hypergraph model and  using CSRH  to solve the corresponding $p$-spectral radius problem. We sort the authors from the view of individual and group respectively. The result of our ranking can be reasonably explained and are in line with the existing consequences in \cite{ng2011multirank}.

%

The paper is organized as follows. In Section $2$, we introduce mathematical notions. The computational issues about $p$-spectral radius are addressed in Section $3$, where our new method  CSRH for computing $p$-spectral radii of hypergraphs is given. In Section $4$, we analyze the convergent property of the CSRH method. The  numerical experiments are represented in Section $5.$ In Section $6,$  we show the application of CSRH method in network analysis. The ranking results of a toy example and  a large scale real-world problem are presented. Finally, we draw  conclusions in Section $7$.

\section{Preliminary}

In this section we introduce useful notions and  important results on hypergraphs and tensors.
Let $\R^{[r,n]}$ be the $r$th order $n$-dimensional real-valued tensor space, i.e.,
$$\R^{[r,n]}\equiv \R^{\overbrace{{n \times n \times \cdots \times n}}^{r\text{-times}}}.$$
A  tensor $\Ten=(t_{i_1 \cdots i_r}) \in\REAL^{[r,n]}$  with $ i_j=1,\ldots,n$ for  $j=1,\ldots,r,$ is said to be symmetric, if $t_{i_1 \cdots i_r}$ is unchanged under any permutation of  indices \cite{chen2016positive}. Two operations  between
$\Ten$ and any vector $\x \in \mathbb{R}^n$ are defined as
\begin{equation*}
    \Ten\x^r \equiv \sum_{i_1=1}^n\cdots\sum_{i_r=1}^n
      t_{i_1\cdots i_r}\x_{i_1}\cdots \x_{i_r}
\end{equation*}
and \begin{equation*}
    (\Ten\x^{r-1})_i \equiv \sum_{i_2=1}^n\cdots\sum_{i_r=1}^n
      t_{ii_2\cdots i_r}\x_{i_2}\cdots \x_{i_r}, \quad \, \text{for} \quad i=1,\ldots,n.
\end{equation*}
Note that,  $\Ten\x^r \in \mathbb{R}$ and $\Ten\x^{r-1} \in \mathbb{R}^n$  are a scalar and a vector respectively,  and $\Ten\x^r = \x^
{\T} (\Ten\x^{r-1}).$

If there exists a real number $\lambda$ and a nonzero real vector $\x$ satisfying
\begin{equation}\label{H-eigenvalue}
  \Ten \x^{m-1}=\lambda \x^{[m-1]},
\end{equation}
then $\lambda$ is called an H-eigenvalue of $\Ten$ with $\x$ being the associated H-eigenvector \cite{qi2005eigenvalues,qi2016tensor}. Additionally,   $\x^{[m-1]} \in \mathbb{R}^n$ is a vector, of which the $i$th element is $\x_i^{m-1}.$
When a real vector $\x$ and a real number $\lambda$ satisfy the following system
\begin{eqnarray*}\label{Z-eignvalue}
\left\{\begin{aligned}
   \Ten \x^{m-1} &=& \lambda \x \\
  \x^{\T} \x&=& 1,
  \end{aligned}\right.
\end{eqnarray*}
$\lambda$ is called a Z-eigenvalue of $\Ten$ and $\x$ is the corresponding Z-eigenvector \cite{qi2005eigenvalues}.

 \begin{Definition}[Hypergraph]
A  hypergraph is defined as $G=(V,E)$, where $V=\{1,2,\ldots,n\}$ is the vertex set and
  $E=\{e_1,e_2,\ldots,e_m\}\subseteq 2^{V}$ (the powerset of $V$) is the edge set. We call $G$ an $r$-uniform hypergraph when  $|e_p|=r \geq 2$ for $p=1,\ldots,m$ and $e_i \neq e_j$ in case of $i \neq j.$

  If each edge of a hypergraph is linked with a positive number $s(e),$ then this hyperpragh is called a weighted hypergraph and $s(e)$ is the weight associated with the edge $e.$   An ordinary hypergraph  can be regarded as a weighted hypergraph with the weight of  each edge being $1.$
\end{Definition}
 In the rest of this paper, an $r$-uniform hypergraph is abbreviated to an $r$-graph for convenience and hence the hypergraph $G$  refers to an $r$-graph.
  The degree of a vertex $i\in V$ is given by $d(i)=\mathrm{sum} \{s(e): i\in e, e\in E\}.$  The \textit{weight polynomial} of $G$ \cite{talbot2002lagrangians} is defined as
 \begin{equation}\label{weightPoly}
   w(G,\x)=\sum_{ e=\{i_1, \ldots , i_r\}  \in E} s(e)\, \x_{i_1} \cdots \x_{i_r},
 \end{equation}
 in which $\x$ is a vector in $\mathbb{R}^n$, $e=\{i_1, \ldots , i_r\}$ is an edge of $G$ and $s(e)$ is the weight of $e.$

 \begin{Definition}[$p$-spectral radius \cite{keevash2014spectral,kang2015p}]
   When $p \geq 1$, the $p$-spectral radius of  $G$, denoted by $\lambda^{(p)}(G)$, is defined  as
\begin{equation}\label{Origi_pSpec}
\lambda^{(p)}(G)= r!\max _{\|\x\|_p=1}  w(G,\x),
\end{equation}
and we call any vector $\x$ solving \eqref{Origi_pSpec} a $p$-optimal weighting of $G$ \cite{caraceni2011lagrangians}.
 \end{Definition}
 When $p=1,$ the $p$-spectral radius of $G$ coincides with its Lagrangian $\lambda_L(G)$ \cite{frankl1989extremal,talbot2002lagrangians}, which is defined as
\begin{equation}\label{lagrangian}
\lambda_L(G)=
   \left\{
   \begin{array}{ll}
     \max & \hbox{$w(G,\x)$} \\
     \mathrm{s.t.} & \hbox{$\sum_{i=1}^{r} \x_i  =  1,$} \\
     & \hbox{$\x_i  \geq 0,\quad \text{for} \,\quad i=1,\ldots, r.$}
   \end{array}
 \right.
\end{equation}
 The vector $\x$ related to the Lagrangian of $G$ is named the optimal legal weighting \cite{caraceni2011lagrangians,talbot2002lagrangians}.
\begin{Definition}[Adjacency tensor ]\label{Adjacency tensor}
 The adjacency tensor $\Adj$ of a weighted $r$-graph $G$ is defined as
  an $r$th order $n$-dimensional symmetric tensor with its elements being
  \begin{equation*}
    a_{i_1 \cdots i_r}=\left\{\begin{aligned}
      & \frac{s(e)}{(r-1)!} && \quad \emph{if } \,\{i_1,\ldots,i_r\}\in E, \\
      &0                && \quad \emph{ otherwise. }
    \end{aligned}\right.
  \end{equation*}
 \end{Definition}
It is obvious  from \eqref{Origi_pSpec} that  the
$2$-spectral radius is exactly the product of $(r-1)!$ times the largest  Z-eigenvalue of the adjacency tensor $\mathcal{A}$, and when
 $r$ is even the $r$-spectral radius is $(r-1)!$ times the largest H-eigenvalue of
$\mathcal{A}$  \cite{qi2005eigenvalues}. 

Although there is no general formula or algorithm for us to compute the $p$-spectral radius of a hypergraph directly, research on $p$-spectral radius of hypergraphs with  certain structures has made some progress.

\begin{Theorem}[\cite{nikiforov2014analytic}]\label{BetaStarTheor}
  Let $r$-graph $G$ be a $\beta$-star with $m$ edges .\\[0.2cm]
     a. If $p>r-1,$ then $\lambda^{(p)}(G)=r!r^{-\frac{r}{p}}m^{(1-\frac{r-1}{p})}.$ \\[0.1cm]
  b. If $p<r-1,$ then $\lambda^{(p)}(G)=r!r^{-\frac{r}{p}}.$\\[0.1cm]
   c. If $p=r-1,$ then $\lambda^{(p)}(G)=(r-1)!r^{-\frac{1}{r-1}}.$

\end{Theorem}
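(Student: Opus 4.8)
The plan is to use the variational formula~\eqref{Origi_pSpec} and to exploit the large automorphism group of the $\beta$-star in order to collapse the maximization over the $\ell^{p}$-sphere to an optimization in at most two scalar variables. Denote the common vertex of the $\beta$-star by $0$ and its $m$ pairwise disjoint $(r-1)$-element private parts by $f_{1},\dots,f_{m}$, so that by~\eqref{weightPoly} one has $w(G,\x)=\x_{0}\sum_{j=1}^{m}\prod_{i\in f_{j}}\x_{i}$. Since every monomial of $w$ has a positive coefficient, $w(G,\x)$ can only increase when each $\x_{i}$ is replaced by $|\x_{i}|$, while $\|\x\|_{p}$ is unchanged; so it suffices to maximize over $\x\ge 0$, and a maximizer exists by compactness of the sphere and continuity of $w$.

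Next I would symmetrize. For fixed values of all coordinates outside a single private part $f_{j}$ and of $\sum_{i\in f_{j}}\x_{i}^{p}$, the AM--GM inequality shows that $\prod_{i\in f_{j}}\x_{i}$ is maximal precisely when all the coordinates in $f_{j}$ are equal; applying this to each $j$ produces a maximizer of the form $\x_{0}=t\ge0$ and $\x_{i}=y_{j}\ge0$ for $i\in f_{j}$, whence
\begin{equation*}
\lambda^{(p)}(G)=r!\,\max\Bigl\{\,t\,{\textstyle\sum_{j=1}^{m}}y_{j}^{\,r-1}\;:\;t^{p}+(r-1){\textstyle\sum_{j=1}^{m}}y_{j}^{\,p}=1,\ t\ge0,\ y_{j}\ge0\,\Bigr\}.
\end{equation*}
Setting $z_{j}=y_{j}^{p}$ and holding $t$ and $\sum_{j}z_{j}$ fixed, the term $\sum_{j}y_{j}^{\,r-1}=\sum_{j}z_{j}^{(r-1)/p}$ is a sum of $(r-1)/p$-th powers of the $z_{j}$: if $p>r-1$ this exponent lies in $(0,1)$, so by concavity the sum is largest when all $z_{j}$ coincide; if $p<r-1$ the exponent exceeds $1$, so the convex function $z\mapsto\sum_{j}z_{j}^{(r-1)/p}$ over the simplex $\{z\ge0,\ \sum_{j}z_{j}=\mathrm{const}\}$ is maximized at a vertex, i.e.\ with all private weight on a single edge; and if $p=r-1$ the value does not depend on how the weight is split among the $z_{j}$.

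It then remains to carry out the final scalar optimizations. For $p>r-1$ one sets $y_{j}\equiv y$ and maximizes $m\,t\,y^{\,r-1}$ subject to $t^{p}+m(r-1)y^{p}=1$; a weighted AM--GM inequality (equivalently, one Lagrange multiplier) yields $t^{p}=m\,y^{p}$, hence $t=r^{-1/p}$, $y=(mr)^{-1/p}$ and optimal value $m^{\,1-(r-1)/p}r^{-r/p}$, which times $r!$ is part~(a). For $p<r-1$ only one edge survives and one maximizes $t\,y^{\,r-1}$ subject to $t^{p}+(r-1)y^{p}=1$; AM--GM gives $t=y=r^{-1/p}$ and value $r^{-r/p}$, which is part~(b). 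For $p=r-1$ both reductions give the common value $r^{-r/(r-1)}$, and since $r!\,r^{-r/(r-1)}=(r-1)!\,r^{\,1-r/(r-1)}=(r-1)!\,r^{-1/(r-1)}$ we obtain part~(c); this value is also the common endpoint of (a) and (b) as $p\to r-1$, a convenient consistency check. The only genuinely delicate step is the reduction to the symmetric two-variable problem --- the intra-edge equalization, and for $p<r-1$ the fact that a convex function over a polytope is maximized at a vertex --- but both are standard; everything afterwards is elementary calculus, so I anticipate no real obstacle beyond keeping track of the exponents.
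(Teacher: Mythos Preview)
Your argument is correct. The paper does not supply its own proof of this theorem: it is quoted from \cite{nikiforov2014analytic} and used only as a benchmark for the numerical experiments in Section~5, so there is nothing to compare against on the paper's side. Your symmetrization-and-reduction approach (AM--GM within each private part, then a convexity/concavity dichotomy on $\sum_j z_j^{(r-1)/p}$ to decide whether the optimal weighting is spread over all edges or concentrated on one, followed by a final weighted AM--GM in the two scalars $t,y$) is a clean self-contained proof, and the exponent bookkeeping in each case checks out, including the identity $r!\,r^{-r/(r-1)}=(r-1)!\,r^{-1/(r-1)}$ at the borderline $p=r-1$.
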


\begin{Proposition}[\cite{caraceni2011lagrangians}]\label{PropLagran}
  If $G$ is a complete $r$-graph with $n$ vertices, then the Lagrangian of $G$ is
  \begin{equation}
   \lambda_{L}(G)=\biggl(\begin{array}{c}
                           n \\
                           r
                        \end{array}\biggr)\frac{1}{n^r}.
    \end{equation}
\end{Proposition}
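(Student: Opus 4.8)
The plan is to identify the weight polynomial of the complete $r$-graph with a classical elementary symmetric polynomial and then conclude via Maclaurin's inequality. First I would note that for the complete $r$-graph $G$ on $n$ vertices every $r$-element subset of $V=\{1,\dots,n\}$ is an edge of weight $1$, so by \eqref{weightPoly}
\[
  w(G,\x)=\sum_{1\le i_1<\cdots<i_r\le n}\x_{i_1}\cdots\x_{i_r}=e_r(\x_1,\dots,\x_n),
\]
the $r$th elementary symmetric polynomial in the coordinates of $\x$. Hence \eqref{lagrangian} is precisely the problem of maximizing $e_r$ over the standard simplex $\Delta=\{\x\in\R^n:\x_i\ge 0,\ \sum_{i=1}^{n}\x_i=1\}$.

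For the upper bound I would invoke Maclaurin's inequality applied to the nonnegative numbers $\x_1,\dots,\x_n$: writing $S_k:=e_k(\x)/\binom{n}{k}$ one has $S_1\ge S_k^{1/k}$ for $1\le k\le n$ (the non-strict form being valid on all of $\Delta$, boundary included, by continuity). Taking $k=r$ and using $e_1(\x)=\sum_{i=1}^{n}\x_i=1$ on $\Delta$ gives
\[
  \frac{e_r(\x)}{\binom{n}{r}}=S_r\le S_1^{\,r}=\left(\tfrac{1}{n}\right)^{r},
\]
so $w(G,\x)=e_r(\x)\le\binom{n}{r}n^{-r}$ for every $\x\in\Delta$, and therefore $\lambda_L(G)\le\binom{n}{r}n^{-r}$. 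For the matching lower bound I would simply evaluate at the barycenter $\x^{\ast}=(1/n,\dots,1/n)\in\Delta$: the sum defining $w(G,\x^{\ast})$ has $\binom{n}{r}$ terms, each equal to $n^{-r}$, whence $w(G,\x^{\ast})=\binom{n}{r}n^{-r}$. Combining the two bounds yields $\lambda_L(G)=\binom{n}{r}n^{-r}$, with $\x^{\ast}$ an optimal legal weighting.

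There is no serious obstacle here; the only delicate point is the use of Maclaurin's inequality at boundary points of $\Delta$ (where some $\x_i$ vanish), which is dispatched by continuity from the relative interior, or by observing that a weighting supported on $n'<n$ vertices simply turns $G$ into a complete $r$-graph on $n'$ vertices. If a self-contained argument is preferred, one can replace Maclaurin by a KKT analysis: at any critical point in the relative interior of $\Delta$ the partial derivatives $\partial e_r/\partial\x_i$, which equal the $(r-1)$th elementary symmetric polynomial in the remaining variables, must all coincide, and since $\partial e_r/\partial\x_i-\partial e_r/\partial\x_j=(\x_j-\x_i)\,e_{r-2}(\x\setminus\{\x_i,\x_j\})$ with the last factor strictly positive when $r\ge 2$ and $n\ge r$, this forces $\x=\x^{\ast}$; on each face of $\Delta$ the problem reduces to a complete $r$-graph on $n'<n$ vertices, and one finishes by noting that $\binom{m}{r}m^{-r}=\tfrac{1}{r!}\prod_{j=0}^{r-1}(1-j/m)$ is strictly increasing in $m$ for $m\ge r$, so no face can improve on the interior value. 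This variant additionally gives uniqueness of the optimal legal weighting $\x^{\ast}$.
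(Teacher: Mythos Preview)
Your argument is correct. The identification $w(G,\x)=e_r(\x)$ is immediate, Maclaurin's inequality yields the sharp upper bound $e_r(\x)\le\binom{n}{r}n^{-r}$ on the simplex, and the barycenter attains it; the KKT variant you sketch as an alternative is also sound, including the monotonicity of $m\mapsto\binom{m}{r}m^{-r}$ used to rule out boundary maxima.

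There is, however, nothing to compare against: the paper does not prove Proposition~\ref{PropLagran} but merely quotes it from \cite{caraceni2011lagrangians}, using it later only as a benchmark in the numerical experiments of Section~5.2. So your proposal supplies a proof where the paper supplies none. The Maclaurin route is the cleanest way to handle this particular statement; the Lagrange-multiplier approach is closer in spirit to how such Lagrangian computations are often carried out in the extremal combinatorics literature (and it gives uniqueness of the optimal weighting for free), but either is more than adequate here.
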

A multiset is an extension of the ordinary set, such that the objects or elements in the multiset are repeatable. If the edge set $E$ of a hypergraph $G$ is a set of multisets, then $G$ is called a multi-hypergraph \cite{pearson2014spectral}. Naturally, the $p$-spectral radius problem can be extended from hypergraph to muli-hypergraph. The algorithm and theoretical analysis in the following part of this paper are  also applicable to $p$-spectral radius problems of multi-hypergraphs.
In the rest of this paper, the symbol $\|\cdot\|$ refers to $\ell_2$ norm and the parameter $p$ is a positive integer unless stated otherwise.

%

\section{Computation of the $p$-spectral radius of a hypergraph}
 We transform the $p$-spectral radius in \eqref{Origi_pSpec} into a spherically constraint optimization problem and propose an iterative algorithm to solve it.

\subsection{Spherically constraint form for  $\lambda^{(p)}(G)$}

The $p$-spectral radius of  $G$ in \eqref{Origi_pSpec} can be reformulated as
\begin{equation}\label{original model}
\lambda^{(p)}(G)= \max _{\|\x\|_p=1} (r-1)! \Adj \x^r
\end{equation}
where $\Adj$ is the adjacency tensor of $G.$
The maximization problem \eqref{original model} is equivalent to an unconstrained format, that is
\begin{equation}\label{unconstrained}
\lambda^{(p)}(G)= \max_{ \x\neq 0} (r-1)! \frac{\mathcal{A} \x^r}{\|\x\|_p^{r}}.
\end{equation}
In order to restrict the search region and keep the vector $\x$ away from zero,
we add a spherically constraint on  $\lambda^{(p)}(G) $ in \eqref{unconstrained}. Due to the  zero-order homogeneous property of $\mathcal{A} \x^r/\|\x\|_p^{r},$   we can obtain  $\lambda^{(p)}(G) $    by solving the following  problem
\begin{equation}\label{sperical constrained model}
\left\{
\begin{aligned}
 \max f(\x) &=(r-1)!\frac{\mathcal{A} \x^r}{\|\x\|_p^{r}} \\
\text{s.t.} \,\, \|\x\|_2& =1.
\end{aligned}
\right.
\end{equation}
When $p>1$,  the  objective function $f(\x)$ is differentiable for any nonzero $\x$ and the  gradient of $f(\x)$ is
\begin{equation}\label{gradient}
\g(\x)=\frac{r!}{\|\x \|_p^r} \left(\mathcal{A} \x^{r-1}- \mathcal{A} \x^r \|\x\|_p^{-p} \x^{\langle p-1 \rangle}\right),
\end{equation}
where $\x^{\langle p-1 \rangle}$ represents a vector whose $i$th element is $(\x^{\langle p-1 \rangle})_i=|x_i|^{p-1} \text{sgn} (x_i).$
Since $f(\x)$ is zero-order homogeneous, we have
\begin{equation}\label{xTg(x)}
\x^\T \g(\x)=0
\end{equation}
 for any $0 \neq \x \in \mathbb{R}^n.$

Based on the spherically constrained form in \eqref{sperical constrained model}, we have the following proposition, which provides a way to approximate the $p$-spectral radius of a hypergraph when it cannot be computed directly.
 \begin{Proposition}\label{PropoLimiPn}
 Let {$p_\vartheta$} be a sequence such that
 \begin{equation}\label{limitpn}
   \lim_{\vartheta\to\infty}p_\vartheta = p_*,
 \end{equation}
  where each $p_\vartheta>0.$
Then 
\begin{equation} \label{limilambd}
  \lim_{\vartheta\rightarrow\infty} \lambda^{(p_\vartheta)}(G) = \lambda^{(p_*)}(G).
\end{equation}
 \end{Proposition}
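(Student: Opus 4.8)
The idea is to get rid of the moving constraint set $\{\|\x\|_{p_\vartheta}=1\}$ by transferring everything to one fixed compact domain, so that the passage $\vartheta\to\infty$ reduces to interchanging a limit with a maximum over a compact set. First I would use the fact that $w(G,\x)/\|\x\|_p^{r}$ is positively homogeneous of degree zero — exactly the observation already exploited in \eqref{unconstrained}--\eqref{sperical constrained model} — to rewrite, for every $p>0$ (for $0<p<1$ the defining formula \eqref{Origi_pSpec} is kept, with $\|\cdot\|_p$ read as the usual functional),
\begin{equation*}
  \lambda^{(p)}(G) = r!\,\max_{\x\in S}\,\frac{w(G,\x)}{\|\x\|_p^{r}}, \qquad S:=\{\x\in\R^n:\|\x\|_2=1\}.
\end{equation*}
The gain is that $S$ is compact and independent of $p$.

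Next, since $p_*>0$, I would fix a compact neighbourhood $K:=[p_*/2,\,2p_*]$, which contains $p_\vartheta$ for all large $\vartheta$, and check two facts on $K\times S$: (i) the map $(p,\x)\mapsto\|\x\|_p$ is jointly continuous there; and (ii) it stays in a compact subinterval of $(0,\infty)$. For (i) one writes $\|\x\|_p=\big(\sum_i|x_i|^p\big)^{1/p}$ and verifies joint continuity of $(p,\x)\mapsto|x_i|^p$ (the only delicate point being $x_i=0$, handled by $|x_i|^{p}\le|x_i|^{p_*/2}$ once $|x_i|\le1$) together with continuity of $(p,s)\mapsto s^{1/p}$ on $K\times(0,\infty)$. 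For (ii) note that on $S$ one has $|x_i|\le1$ and $\|\x\|_\infty\ge n^{-1/2}$, so $n^{-1}\le\sum_i|x_i|^p\le n$ uniformly over $K\times S$, whence $\|\x\|_p$ is bounded above and bounded away from $0$. Combined with continuity of the polynomial $w(G,\cdot)$, this makes $g(p,\x):=w(G,\x)/\|\x\|_p^{r}$ continuous on the compact set $K\times S$.

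Finally I would conclude that $M(p):=\max_{\x\in S}g(p,\x)$ is continuous on $K$, which gives \eqref{limilambd} after multiplying by $r!$. One can invoke Berge's maximum theorem, but an elementary two‑sided estimate is enough: for the lower bound, fix a maximizer $\x_*\in S$ of $g(p_*,\cdot)$ and use continuity of $p\mapsto g(p,\x_*)$ to get $\liminf_\vartheta M(p_\vartheta)\ge\lim_\vartheta g(p_\vartheta,\x_*)=M(p_*)$; for the upper bound, pick maximizers $\x_\vartheta\in S$ of $g(p_\vartheta,\cdot)$, pass to a subsequence with $\x_\vartheta\to\bar\x\in S$ (possible by compactness of $S$), and use joint continuity of $g$ to get $\limsup_\vartheta M(p_\vartheta)=\lim_k g(p_{\vartheta_k},\x_{\vartheta_k})=g(p_*,\bar\x)\le M(p_*)$. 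I expect the only genuine work to be the joint continuity of $\|\x\|_p$ at vectors with vanishing coordinates and the uniform lower bound $\|\x\|_p\ge c>0$ on $K\times S$; with these in hand the interchange of limit and maximum is immediate.
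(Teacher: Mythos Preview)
Your proposal is correct and follows essentially the same route as the paper: both transfer the problem to the fixed Euclidean sphere $S=\mathbb{S}^{n-1}$ via zero-order homogeneity, invoke joint continuity of $(p,\x)\mapsto w(G,\x)/\|\x\|_p^{r}$ on a compact set, and then extract a convergent subsequence of maximizers to pass the limit through the maximum. Your treatment is in fact more careful than the paper's (which simply asserts continuity of $\hat f$), since you explicitly verify joint continuity of $\|\cdot\|_p$ and its positive lower bound on $K\times S$; the only cosmetic slip is the constant in the bound $\sum_i|x_i|^p\ge n^{-1}$, which should read $n^{-p/2}$ when $p>2$, but this does not affect the argument.
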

\begin{proof}
We restrict the domain of $\x$ on a unit sphere, which is denoted as $\mathbb{S}^{n-1} \equiv \{\x \in \mathbb{R}^n: \x^\T \x=1\}.$
Rename the function in \eqref{sperical constrained model} as
\begin{equation*}
\hat{f}(\x,p) =(r-1)!\frac{\mathcal{A} \x^r}{\|\x\|_p^{r}} \qquad (\x, p) \in  \mathbb{S}^{n-1}\times(0,+\infty),
\end{equation*}
and we have
\begin{equation*}
 \lambda^{(p)}(G) = \max_{\x \in  \mathbb{S}^{n-1}} \hat{f}(\x,p).
\end{equation*}
Here $\hat{f}(\x,p)$ is continuous.
Let $\{\x_\vartheta^*\}$ be an infinite sequence on the compact space $\mathbb{S}^{n-1},$ such that
\begin{equation}\label{EOptipoi}
  \hat{f}(\x_\vartheta^*, p_\vartheta) = \lambda^{(p_\vartheta)}(G).
\end{equation}
If there are more than one point satisfying the equation \eqref{EOptipoi}, we randomly choose one of them to be $\x_\vartheta^*.$
Suppose $\{\x_\vartheta^*\}$ is a convergent sequence without loss of generality. Since the sequence is bounded, there exists a point $\x_0^*\in\mathbb{S}^{n-1}$ satisfying
\begin{equation}\label{Limitxn}
  \lim_{\vartheta\rightarrow \infty} \x_\vartheta^* = \x_0^*.
\end{equation}
For any  $\tilde{\x} \in \mathbb{S}^{n-1},$  we have
\begin{equation}\label{Contradic}
  \hat{f}(\tilde{\x},p_\vartheta)\leq \hat{f}(\x_\vartheta^*,p_\vartheta)
\end{equation}
from \eqref{EOptipoi},
which indicates that
\begin{equation*}
  \lim_{\vartheta\rightarrow\infty} \hat{f}(\tilde{\x},p_\vartheta)\leq \lim_{\vartheta\rightarrow\infty}  \hat{f}(\x_\vartheta^*,p_\vartheta).
\end{equation*}
Then we obtain
\begin{equation}\label{SuppLes}
   \hat{f}(\tilde{\x},p_*) \leq  \hat{f}(\x_0^*,p_*)
\end{equation}
based on \eqref{limitpn} and \eqref{Limitxn}.
Therefore we have
$
  \hat{f}(\x_0^*,p_*) = \max_{\x \in \mathbb{S}^{n-1}} \hat{f}(\x,p_*)= \lambda^{(p_*)}(G).
$ 
Since $$\hat{f}(\x_0^*,p_*) = \lim_{\vartheta \rightarrow \infty} \hat{f}(\x_\vartheta^*,p_\vartheta) =  \lim_{\vartheta \rightarrow \infty}\lambda^{(p_\vartheta)}(G),$$
conclusion \eqref{limilambd} is then obtained.

\end{proof}

\subsection{The CSRH algorithm }
We employ an iterative algorithm to solve \eqref{sperical constrained model}.
\begin{figure}
  \centering
  \includegraphics[width=.5\textwidth]{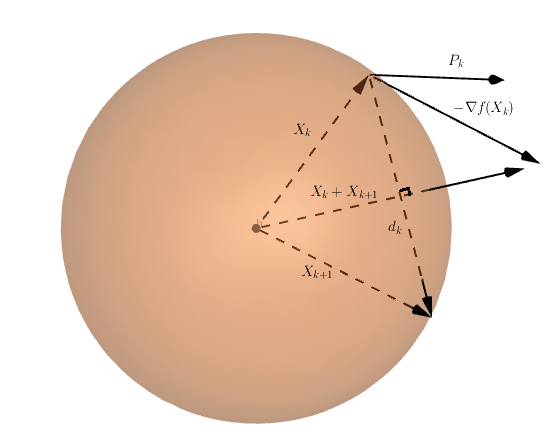}
  \caption{Illustration of the new iterate on the  unit sphere}\label{Ball}
\end{figure}
Suppose that the current iterate is a unit vector $\x_k$. Our task is to find a new iterate $\x_{k+1},$ which satisfies the following two conditions.
\begin{enumerate}
  \item  $\x_{k+1}$ is on the unit sphere;
  \item $\dd_k=\x_{k+1}-\x_k$ is an ascent direction, i.e.,
\begin{equation}\label{DirecDescPro}
  \dd_k^{ \top} \nabla f(\x_k)>0 .
\end{equation}

\end{enumerate}

In Figure \ref{Ball}, the current iterate $\x_k$ is on the unit sphere and  we can see that  $\x_{k+1}$ is a unit vector if and only if the vector
$\x_{k+1}+\x_{k}$ and the vector $\dd_k=\x_{k+1}-\x_k$ are perpendicular to each other, i.e.
\begin{equation}\label{OrthVec}
  (\x_{k+1}+\x_{k})^{\top}{\dd_{k}}=0.
\end{equation}
Let $W_k$ be a skew-symmetric matrix, i.e., $W_k=-W_k^{\top}.$ Then we have
\begin{equation*}
  (\x_{k}+\x_{k+1})^{\top} W_k(\x_{k+1}+\x_k)
  = -(\x_{k+1}+\x_{k})^{\top} W_k(\x_{k+1}+\x_k) =0.
\end{equation*}
Therefore, the equation \eqref{OrthVec} is feasible and the first condition of $\x_{k+1}$ holds when
\begin{equation}\label{SkewSyAppe}
  \dd_k = W_k(\x_{k}+\x_{k+1}).
\end{equation}
Furthermore, based on the  optimization techniques it is available to find an ascent direction  $\p_k$  such that
\begin{equation}\label{DirecDescPro2}
  \p_k^{ \top} \nabla f(\x_k)>0 .
\end{equation}
Then the existing information in Figure \ref{Ball} for us to obtain $\dd_k$ is $\p_k$ and $\x_k,$  both of which have relation with $\nabla f(\x_k)$ in \eqref{DirecDescPro2} and \eqref{xTg(x)} respectively. Hence, in order to satisfy \eqref{DirecDescPro} we construct $\dd_k$ as a combination of $\x_k$ and $\p_k,$ i.e.,
\begin{equation}\label{LinComb}
  \dd_k = a \x_k + b \p_k,
\end{equation}
and obtain
\begin{equation}\label{LinCombTgx_k}
  \dd_k^{\top} \nabla f(\x_k) = a \x_k^{\top}\nabla f(\x_k) + b \p_k^{\top}\nabla f(\x_k) =b \p_k^{\top}\nabla f(\x_k).
\end{equation}
Therefore, if $b>0$ in \eqref{LinComb}, $\dd_k$ is an ascent direction with $\dd_k^{\top} \nabla f(\x_k)>0$.

The previous analysis shows that the two conditions of $\x_{k+1}$ are valid when $\dd_k$ satisfies \eqref{SkewSyAppe} and \eqref{LinComb} for $b>0$. This motivates us to construct the skew-symmetric matrix $W_k$ by $\x_k$ and $\p_k.$ Let
\begin{equation}\label{EquaW_k}
W_k=\frac{1}{2}\alpha (\p_k \x_k^\T-\x_k \p_k^\T  )\in \mathbb{R}^{n \times n}
\end{equation}
with $\alpha$ being a positive parameter. The constant $b=\frac{1}{2}\alpha\x_k^{\top}(\x_k+\x_{k+1})$ in \eqref{LinComb}.
Since the angle between vectors $\x_k$ and $\x_k+\x_{k+1}$ is less than or equal to $\frac{\pi}{2}$ in Figure \ref{Ball}, then we have $b\geq0.$  However if $b=0,$ i.e., $\x_{k+1}=-\x_k,$ there is a contradiction when we substitute $\x_{k+1}$ by $-\x_{k}$  in \eqref{SkewSyAppe}.
Hence, we have $b>0$ and equations \eqref{SkewSyAppe} and \eqref{LinComb} hold, which means the two conditions of $\x_{k+1}$ are satisfied when $W_k$ is the matrix in \eqref{EquaW_k} with $\p_k$  being an ascent direction.
\begin{Lemma} \label{Direct computation method-Lemma}
The new iterate $\x_{k+1}$ can be expressed as
\begin{equation}\label{x_(k+1)}
\x_{k+1}(\alpha)=\frac{[(2-\alpha \x_k^\T \p_k)^2-\| \alpha \p_k  \|^2] \x_k+4\alpha \p_k}{4+\| \alpha \p_k \|^2-(\alpha \x_k^\T \p_k)^2},
\end{equation}
from \eqref{SkewSyAppe} and \eqref{EquaW_k}.
Further we have
\begin{equation}\label{Eqx_(k+1)-x_k}
\| \x_{k+1}(\alpha) -\x_k \|=2\left( \frac{\|\alpha \p_k \|^2-(\alpha \x_k^\T \p_k)^2}{4+\|\alpha \p_k \|^2-(\alpha \x_k^\T \p_k)^2}\right)^{\frac{1}{2}}.
\end{equation}
\end{Lemma}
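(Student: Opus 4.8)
The plan is to derive the closed form \eqref{x_(k+1)} directly by solving the linear system obtained when one substitutes $W_k$ from \eqref{EquaW_k} into \eqref{SkewSyAppe}, and then to compute the norm \eqref{Eqx_(k+1)-x_k} by a routine algebraic simplification. First I would write out \eqref{SkewSyAppe} explicitly: with $\dd_k = \x_{k+1}-\x_k$ and $W_k = \tfrac12\alpha(\p_k\x_k^\T - \x_k\p_k^\T)$, the relation $\dd_k = W_k(\x_k+\x_{k+1})$ becomes
\begin{equation*}
\x_{k+1}-\x_k = \tfrac12\alpha\bigl(\p_k\x_k^\T - \x_k\p_k^\T\bigr)(\x_k+\x_{k+1}).
\end{equation*}
Expanding the right-hand side, and abbreviating the scalars $s := \x_k^\T(\x_k+\x_{k+1}) = 1 + \x_k^\T\x_{k+1}$ and $t := \p_k^\T(\x_k+\x_{k+1})$, this reads $\x_{k+1}-\x_k = \tfrac12\alpha(s\,\p_k - t\,\x_k)$. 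Hence $\x_{k+1}$ is an explicit affine combination of $\x_k$ and $\p_k$, namely $\x_{k+1} = (1-\tfrac12\alpha t)\x_k + \tfrac12\alpha s\,\p_k$; this is exactly the representation \eqref{LinComb} with $a = 1-\tfrac12\alpha t$ and $b = \tfrac12\alpha s$, consistent with the text.

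The main work is then to solve for the two unknown scalars $s$ and $t$. I would take the inner product of the relation $\x_{k+1} = (1-\tfrac12\alpha t)\x_k + \tfrac12\alpha s\,\p_k$ with $\x_k$ and with $\p_k$ in turn, using $\|\x_k\|^2 = 1$, and couple these with the definitions of $s$ and $t$ and the normalization $\|\x_{k+1}\|^2 = 1$ (equivalently \eqref{OrthVec}). Writing $\sigma := \x_k^\T\p_k$ and $\rho := \|\p_k\|^2$, one gets a small linear (resp.\ quadratic) system; the orthogonality condition \eqref{OrthVec}, $(\x_{k+1}+\x_k)^\T\dd_k = 0$, together with $\dd_k = \tfrac12\alpha(s\p_k - t\x_k)$, yields $s\,t$-relations that reduce to a single linear equation once one substitutes $s = 1+\x_k^\T\x_{k+1}$ and $t = \sigma + \p_k^\T\x_{k+1}$. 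Carrying this through should produce
\begin{equation*}
\x_k^\T\x_{k+1} = \frac{(2-\alpha\sigma)^2 - \alpha^2\rho - 4 + \alpha^2\rho + \dots}{4+\alpha^2\rho - \alpha^2\sigma^2},
\end{equation*}
which after cleanup gives $s = 1+\x_k^\T\x_{k+1} = \dfrac{4 - 2\alpha\sigma\cdot(\text{something})}{\,4+\alpha^2\rho-\alpha^2\sigma^2\,}$ and similarly for $t$; plugging $a$ and $b$ back into \eqref{LinComb} collapses to the stated formula \eqref{x_(k+1)} with numerator $[(2-\alpha\x_k^\T\p_k)^2 - \|\alpha\p_k\|^2]\x_k + 4\alpha\p_k$ and denominator $4+\|\alpha\p_k\|^2 - (\alpha\x_k^\T\p_k)^2$. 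I would double-check the sign conventions and the factor $\tfrac12$ at this stage, since that is where arithmetic slips are most likely.

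For \eqref{Eqx_(k+1)-x_k}, I would simply compute $\|\x_{k+1}(\alpha)-\x_k\|^2$ from the closed form. Since both $\x_{k+1}$ and $\x_k$ are unit vectors, $\|\x_{k+1}-\x_k\|^2 = 2 - 2\x_k^\T\x_{k+1} = 2(1 - \x_k^\T\x_{k+1})$. Using the expression for $\x_k^\T\x_{k+1}$ obtained above, the quantity $1 - \x_k^\T\x_{k+1}$ should simplify to $\dfrac{2(\|\alpha\p_k\|^2 - (\alpha\x_k^\T\p_k)^2)}{4+\|\alpha\p_k\|^2-(\alpha\x_k^\T\p_k)^2}$, whence $\|\x_{k+1}-\x_k\|^2 = \dfrac{4(\|\alpha\p_k\|^2-(\alpha\x_k^\T\p_k)^2)}{4+\|\alpha\p_k\|^2-(\alpha\x_k^\T\p_k)^2}$ and taking square roots gives \eqref{Eqx_(k+1)-x_k}. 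Alternatively, and perhaps more transparently, one can avoid computing $\x_k^\T\x_{k+1}$ separately by writing $\x_{k+1}-\x_k = \dfrac{-(\|\alpha\p_k\|^2-(\alpha\x_k^\T\p_k)^2)\x_k + 4\alpha\p_k - 4(\alpha\x_k^\T\p_k)\,(\text{correction})}{4+\|\alpha\p_k\|^2-(\alpha\x_k^\T\p_k)^2}$ directly from \eqref{x_(k+1)} — subtract $\x_k$ times the denominator from the numerator — and then expand the squared norm of the resulting vector, again using $\|\x_k\|^2=1$ and the Cauchy–Schwarz-flavoured identity between $\rho$, $\sigma$. The main obstacle throughout is purely bookkeeping: keeping the scalars $\x_k^\T\p_k$, $\|\p_k\|^2$, and the parameter $\alpha$ straight through several substitutions, and verifying that the cross terms cancel exactly so that the final denominators match. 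There is no conceptual difficulty once the substitution $\dd_k = W_k(\x_k+\x_{k+1})$ is unwound into the affine form \eqref{LinComb}.
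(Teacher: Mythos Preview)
Your approach is correct and, in fact, more self-contained than the paper's. The paper does not carry out the computation at all: it simply rewrites \eqref{SkewSyAppe} as $(I-W_k)\x_{k+1}=(I+W_k)\x_k$, observes that $\x_{k+1}=(I-W_k)^{-1}(I+W_k)\x_k$ is a Cayley transform, and then defers to Lemma~3.2 of \cite{chang2016computing,chen2015computing} for the explicit formula (where the rank-two structure of $W_k$ is exploited, e.g.\ via Sherman--Morrison--Woodbury). Your route---reducing to the scalar unknowns $s=\x_k^\T(\x_k+\x_{k+1})$ and $t=\p_k^\T(\x_k+\x_{k+1})$ and solving by taking inner products with $\x_k$ and $\p_k$---reaches the same destination without the matrix-inverse machinery and without citing external lemmas. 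One small clarification: you do \emph{not} need to invoke the normalization $\|\x_{k+1}\|=1$ as an extra equation. The two inner products alone already give the linear $2\times2$ system
\[
s(2-\alpha\sigma)=4-\alpha t,\qquad t(2+\alpha\sigma)=4\sigma+\alpha s\rho,
\]
whose unique solution is $s=\dfrac{8}{4+\alpha^2\rho-\alpha^2\sigma^2}$; the unit norm of $\x_{k+1}$ then follows automatically from the skew-symmetry of $W_k$ (equivalently, from the orthogonality of the Cayley transform). With $s$ in hand, your computation $\|\x_{k+1}-\x_k\|^2=2(1-\x_k^\T\x_{k+1})=4-2s$ immediately yields \eqref{Eqx_(k+1)-x_k}.
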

\begin{proof}
From \eqref{SkewSyAppe}, we obtain $\x_{k+1}=Q\x_{k},$
where
\begin{equation*}
  Q = (I-W_k)^{-1}(I+W_k).
\end{equation*}
That is to say the orthogonal transform is in fact the Cayley transform. The proof is then similar to Lemma $3.2$ in  \cite{chang2016computing,chen2015computing}.
\end{proof}

For the new point $\x_{k+1}$ in \eqref{x_(k+1)}, a crucial step is to find an ascent direction $\p_k$ to guarantee the ascent property in \eqref{DirecDescPro2}. Since problems related with hypergraphs and tensors are often large and time-consuming for computation, we employ  the nonlinear conjugate gradient method, which is  proposed for large-scale nonlinear optimization problems, to acquire a suitable $\p_k$. The nonlinear conjugate gradient method  does not need the Hessian matrices of the objective function and is usually faster than the steepest descent method. In \cite{hager2005new,hager2006survey}, a  nonlinear conjugate gradient method called  CG$\_$DESCENT was given and it was proved that the  CG$\_$DESCENT possesses a good descent property.  Attracted by this merit, we adopt the construction of  parameter $\beta_k$ in CG$\_$DESCENT and obtain the ascent direction $\p_k$ by
\begin{equation}\label{Direction p_k}
\p_k=\g(\x_k)+\beta_{k-1} \dd_{k-1}.
\end{equation}
The scalar $\beta_{k-1}$ above is defined as $\beta_{k-1}=\max(0,\tilde{\beta}_{k-1})$, where
\begin{equation}\label{beta}
\tilde{\beta}_{k-1} =\left\{
\begin{aligned}
& \left(\tau\dd_{k-1} \frac{\|\textbf{y}_{k-1}\|^2}{\dd_{k-1}^\T \textbf{y}_{k-1}}-\textbf{y}_{k-1}\right)^\T \frac{\g{(\x_k)}}{\dd_{k-1}^\T \textbf{y}_{k-1}} \quad \text{if} \,| \dd_{k-1}^\T \textbf{y}_{k-1}  |\geq  \epsilon \|\dd_{k-1}\| \|\y_{k-1}\| \\
& \qquad \qquad \quad \qquad  0 \qquad  \qquad  \qquad \qquad \qquad  \quad \quad\text{otherwise,}  \\
\end{aligned}
\right.
\end{equation}
$\textbf{y}_{k-1}=\g(\x_k)-\g(\x_{k-1}),$  parameters $\frac{1}{4}<\tau <1 $ and $\epsilon>0.$
The initial direction is chosen as
$\p_0=\g(\x_0).$
The direction $\p_k$ in \eqref{Direction p_k} is proved to satisfy the ascent property in the following Lemma.
\begin{Lemma}\label{Propertiyofp_k}
  The search direction $\p_k$ generated by \eqref{Direction p_k} satisfies the sufficient ascent condition, i.e.
\begin{equation}\label{ascent property}
\p_k^\T \g(\x_k)\geq \left(1-\frac{1}{4\tau} \right) \|\g(\x_k)\|^2,
\end{equation}
and there exists a constant $M_0>1$ such that
 \begin{equation}\label{bounded p_k}
\|\p_k\| \leq M_0 \|\g(\x_k)\|.
 \end{equation}
\end{Lemma}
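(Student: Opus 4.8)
The plan is to follow the analysis of the CG\_DESCENT scheme \cite{hager2005new}, translated to the ascent setting used here. First I would dispose of the degenerate cases. For $k=0$ we have $\p_0=\g(\x_0)$, so $\p_0^\T\g(\x_0)=\|\g(\x_0)\|^2$ and $\|\p_0\|=\|\g(\x_0)\|$, which satisfy \eqref{ascent property} and \eqref{bounded p_k} with room to spare. Likewise, whenever $\beta_{k-1}=0$ (which covers the ``otherwise'' branch of \eqref{beta} as well as the case $\tilde\beta_{k-1}<0$), we again have $\p_k=\g(\x_k)$ and the two bounds are immediate. So the substance is the case $\beta_{k-1}=\tilde\beta_{k-1}>0$, in which \eqref{beta} guarantees $|\dd_{k-1}^\T\y_{k-1}|\ge\epsilon\|\dd_{k-1}\|\,\|\y_{k-1}\|>0$; in particular $\dd_{k-1}$, $\y_{k-1}$ and $\dd_{k-1}^\T\y_{k-1}$ are all nonzero, so every division below is legitimate.

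For the sufficient ascent inequality \eqref{ascent property}, I would expand $\p_k^\T\g(\x_k)=\|\g(\x_k)\|^2+\tilde\beta_{k-1}\bigl(\dd_{k-1}^\T\g(\x_k)\bigr)$. Writing $u=\g(\x_k)$, $v=\y_{k-1}$, $\theta=\dd_{k-1}^\T\y_{k-1}$ and $a=(\dd_{k-1}^\T\g(\x_k))/\theta$, substituting the formula for $\tilde\beta_{k-1}$ from \eqref{beta} collapses the cross term to $\tau\|v\|^2a^2-(v^\T u)\,a$. Completing the square in $a$ (using $\tau>0$) bounds this below by $-(v^\T u)^2/(4\tau\|v\|^2)$, and Cauchy--Schwarz gives $(v^\T u)^2\le\|v\|^2\|u\|^2$; hence the cross term is at least $-\|u\|^2/(4\tau)$, which is exactly \eqref{ascent property}. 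Note that the hypothesis $\tau>\tfrac14$ is what makes the factor $1-\tfrac1{4\tau}$ positive, so that $\p_k$ is a genuine ascent direction.

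For the norm estimate \eqref{bounded p_k} I would use $\|\p_k\|\le\|\g(\x_k)\|+\tilde\beta_{k-1}\|\dd_{k-1}\|$ and bound $\tilde\beta_{k-1}$ termwise from \eqref{beta}: the triangle inequality plus Cauchy--Schwarz in the numerators ($|\dd_{k-1}^\T\g(\x_k)|\le\|\dd_{k-1}\|\|\g(\x_k)\|$ and $|\y_{k-1}^\T\g(\x_k)|\le\|\y_{k-1}\|\|\g(\x_k)\|$) together with the safeguard $|\dd_{k-1}^\T\y_{k-1}|\ge\epsilon\|\dd_{k-1}\|\|\y_{k-1}\|$ in the denominators give $\tilde\beta_{k-1}\|\dd_{k-1}\|\le(\tau/\epsilon^2+1/\epsilon)\|\g(\x_k)\|$. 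Therefore \eqref{bounded p_k} holds with $M_0=1+\tau/\epsilon^2+1/\epsilon$, and $M_0>1$ because $\tau,\epsilon>0$.

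I do not expect a deep obstacle: the estimates are essentially one application each of completing the square, Cauchy--Schwarz, and the thresholding condition. The one point requiring care is the bookkeeping of the piecewise definition of $\beta_{k-1}$ --- the square-completion and the division by $\dd_{k-1}^\T\y_{k-1}$ are only valid on the branch where that inner product is bounded away from $0$, so the case split of \eqref{beta} (and the base case $k=0$) must be made explicit before any manipulation, otherwise the argument formally breaks down in the degenerate configurations.
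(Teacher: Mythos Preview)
Your proposal is correct and follows essentially the same approach as the paper: the paper also splits off the $\beta_{k-1}=0$ case, then for $\beta_{k-1}\neq 0$ expands $\p_k^\T\g(\x_k)$ and shows the cross terms are bounded below by $-\tfrac{1}{4\tau}\|\g(\x_k)\|^2$ via the same quadratic/Cauchy--Schwarz mechanism, and obtains the identical constant $M_0=1+\tfrac{1}{\epsilon}+\tfrac{\tau}{\epsilon^2}$ for the norm bound. Your explicit handling of the base case $k=0$ and of the well-definedness of the divisions is a minor presentational improvement over the paper's write-up.
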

\begin{proof}
  When $\beta_k=0,$ it is easy to show that the two inequalities hold.
For $\beta_k \neq 0,$ we have
  \begin{equation*}
    \begin{aligned}
    \p_k^\T \g(\x_k) &= \|\g(\x_k)\|^2+ \tau \frac{\dd_{k-1}^\T\g(\x_k)}{\dd_{k-1}^\T\y_{k-1}} \frac{\|\y_{k-1}\|^2}{\dd_{k-1}^\T\y_{k-1}}\dd_{k-1}^\T\g(\x_k)
    - \frac{\y_{k-1}^\T\g(\x_k)}{\dd_{k-1}^\T\y_{k-1}} \dd_{k-1}^\T\g(\x_k) \\
    & = \frac{1}{4 \tau}\|\g(\x_k)\|^2 -\frac{\dd_{k-1}^\T\g(\x_{k})}{\dd_{k-1}^\T\y_{k-1}}\y_{k-1}^\T\g(\x_{k}) +\tau \frac{(\dd_{k-1}^\T\g(\x_{k}))^2}{(\dd_{k-1}^\T\y_{k-1})^2} \|\y_{k-1}\|^2 \\  &\quad +\left(1-\frac{1}{4\tau} \right) \|\g(\x_k)\|^2 \\
    & \geq \left(1-\frac{1}{4\tau} \right) \|\g(\x_k)\|^2.
    \end{aligned}
  \end{equation*}
 Since $$\|\dd_{k-1}\cdot\textbf{y}_{k-1}^\T \|=\| \dd_{k-1}\| \cdot \|\textbf{y}_{k-1}\| \quad \text{and} \quad \|\dd_{k-1}\cdot\dd_{k-1}^\T  \|=\|\dd_{k-1}\|^2 ,$$  we obtain
\begin{equation*}
 \begin{aligned}
\|\beta_{k-1} \dd_{k-1}\|&\leq \left\| \frac{\tau \| \textbf{y}_{k-1}\|^2 \cdot \dd_{k-1}\cdot \dd_{k-1}^\T -\dd_{k-1}^\T \textbf{y}_{k-1} \cdot \dd_{k-1}\cdot\textbf{y}_{k-1}^\T  }
{(\dd_{k-1}^\T \textbf{y}_{k-1})^2} \right\| \cdot \| \g(\x_k) \| \\
& \leq \left[  \frac{ \|\dd_{k-1}\| \|\textbf{y}_{k-1} \|}{|\dd_{k-1}^\T \textbf{y}_{k-1}|}
+  \frac{\tau  \| \textbf{y}_{k-1}\|^2  \|\dd_{k-1}\|^2 }{(\dd_{k-1}^\T \textbf{y}_{k-1})^2}   \right]\cdot \| \g(\x_k) \| \\
&\leq \left[\frac{1}{\epsilon} +  \frac{\tau}{\epsilon^2} \right] \|\g(\x_k)\|.\\
 \end{aligned}
 \end{equation*}
  Then we deduce that
  \begin{equation*}
 \begin{aligned}
\|\p_k\| \leq \|\g(\x_k)\|+\|\beta_{k-1} \dd_{k-1}\| \leq \left[1+\frac{1}{\epsilon}+ \frac{\tau }{\epsilon^2}\right]\|\g(\x_k)\|.
 \end{aligned}
 \end{equation*}
 Inequality \eqref{bounded p_k} is valid when $M_0=1+\frac{1}{\epsilon}+ \frac{\tau}{\epsilon^2}.$
\end{proof}

In the curvilinear line search,  the parameter $\alpha$ in \eqref{x_(k+1)} is determined to ensure that the Wolfe conditions hold. We provide the details  in the next subsection.
\subsection{Feasibility of Wolfe conditions}
In this section we prove that there exists a step length $\alpha_k$ satisfying the Wolfe conditions for the curvilinear search in \eqref{x_(k+1)} in each iteration. First, we compute the derivative of $\alpha$ which plays an important role in line search.
\begin{Lemma}
Let $f'(\alpha)$ be the derivative of $f(\x_{k+1}(\alpha))$ at point $\alpha.$ Then we have
\begin{equation}\label{Alp_f'}
\alpha f'(\alpha)=-\g(\x_{k+1}(\alpha))^\T \x_k.
\end{equation}
\end{Lemma}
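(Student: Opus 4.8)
The plan is to differentiate $f(\x_{k+1}(\alpha))$ with respect to $\alpha$ via the chain rule, writing $f'(\alpha) = \g(\x_{k+1}(\alpha))^\T \x_{k+1}'(\alpha)$, and then to compute $\x_{k+1}'(\alpha)$ directly from the explicit formula \eqref{x_(k+1)}. The key structural fact I would exploit is the orthogonality relation \eqref{xTg(x)}, namely $\g(\y)^\T \y = 0$ for every nonzero $\y$; applied at $\y = \x_{k+1}(\alpha)$, which is a unit vector, it kills the component of $\x_{k+1}'(\alpha)$ parallel to $\x_{k+1}(\alpha)$. So in the inner product $\g(\x_{k+1}(\alpha))^\T \x_{k+1}'(\alpha)$ only the component of $\x_{k+1}'(\alpha)$ orthogonal to $\x_{k+1}(\alpha)$ survives, and from \eqref{x_(k+1)} that component should be expressible cleanly in terms of $\x_k$.

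Concretely, I would write $\x_{k+1}(\alpha) = \frac{N(\alpha)}{D(\alpha)}$ with $N(\alpha) = [(2-\alpha \x_k^\T \p_k)^2 - \|\alpha \p_k\|^2]\x_k + 4\alpha \p_k$ and $D(\alpha) = 4 + \|\alpha\p_k\|^2 - (\alpha\x_k^\T\p_k)^2$, then differentiate the quotient: $\x_{k+1}'(\alpha) = \frac{N'(\alpha)D(\alpha) - N(\alpha)D'(\alpha)}{D(\alpha)^2} = \frac{N'(\alpha)}{D(\alpha)} - \x_{k+1}(\alpha)\frac{D'(\alpha)}{D(\alpha)}$. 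Taking the inner product with $\g(\x_{k+1}(\alpha))$ and using \eqref{xTg(x)} to drop the $\x_{k+1}(\alpha)$ term, I get $f'(\alpha) = \frac{1}{D(\alpha)}\g(\x_{k+1}(\alpha))^\T N'(\alpha)$. Now $N'(\alpha)$ is a linear combination of $\x_k$ and $\p_k$, specifically $N'(\alpha) = [-2\x_k^\T\p_k(2-\alpha\x_k^\T\p_k) - 2\alpha\|\p_k\|^2]\x_k + 4\p_k$. The aim is to massage $\frac{1}{D(\alpha)}N'(\alpha)$ into the form $-\frac{1}{\alpha}\x_k + (\text{multiple of }\x_{k+1}(\alpha))$, because then another application of \eqref{xTg(x)} collapses everything to $f'(\alpha) = -\frac{1}{\alpha}\g(\x_{k+1}(\alpha))^\T\x_k$, which is exactly \eqref{Alp_f'}.

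To see why $N'(\alpha)$ decomposes that way, note from \eqref{x_(k+1)} that $\p_k$ itself can be solved for as a combination of $\x_{k+1}(\alpha)$ and $\x_k$: rearranging $D(\alpha)\x_{k+1}(\alpha) = [(2-\alpha\x_k^\T\p_k)^2 - \|\alpha\p_k\|^2]\x_k + 4\alpha\p_k$ gives $\p_k = \frac{1}{4\alpha}\bigl(D(\alpha)\x_{k+1}(\alpha) - [(2-\alpha\x_k^\T\p_k)^2 - \|\alpha\p_k\|^2]\x_k\bigr)$. Substituting this expression for the $4\p_k$ appearing in $N'(\alpha)$ and simplifying the $\x_k$-coefficient using the definition of $D(\alpha)$ is the computation that should produce the claimed identity; I expect the algebra to telescope because $D(\alpha) - [(2-\alpha\x_k^\T\p_k)^2-\|\alpha\p_k\|^2] = 2\|\alpha\p_k\|^2 - 2(\alpha\x_k^\T\p_k)^2 + \ldots$ matches the $\x_k$-coefficient in $N'(\alpha)$ up to the factor $-\frac{1}{\alpha}D(\alpha)$. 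The main obstacle is purely this bookkeeping step: keeping track of the several quadratic-in-$\alpha$ scalar coefficients and verifying they combine so that the $\x_k$-coefficient of $\frac{1}{D(\alpha)}N'(\alpha)$ is exactly $-\frac{1}{\alpha}$ after the $\x_{k+1}(\alpha)$-component is discarded. An alternative, perhaps cleaner, route that avoids some of this is to work on the sphere from the start: parametrize $\x_{k+1}(\alpha) = Q(\alpha)\x_k$ with $Q(\alpha) = (I - W_k(\alpha))^{-1}(I + W_k(\alpha))$ the Cayley transform, differentiate to get $Q'(\alpha) = (I-W_k)^{-1}\bigl(\tfrac{\partial W_k}{\partial\alpha}(I + Q(\alpha))\bigr)$ with $\tfrac{\partial W_k}{\partial\alpha} = \tfrac1\alpha W_k$, and then use skew-symmetry of $W_k$ together with $\g(\x_{k+1})^\T\x_{k+1}=0$ to reduce $\g(\x_{k+1})^\T Q'(\alpha)\x_k$ to $-\tfrac1\alpha\g(\x_{k+1})^\T\x_k$; this mirrors the referenced Lemma~3.2 of \cite{chang2016computing,chen2015computing} and I would present whichever of the two is shorter.
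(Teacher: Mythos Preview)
Your proposal is correct and follows essentially the same route as the paper: the paper differentiates the implicit relation $D(\alpha)\x_{k+1}(\alpha)=N(\alpha)$ (rather than the quotient), multiplies through by $\alpha$, and uses the defining relation itself to eliminate $\p_k$, arriving at $\alpha\x_{k+1}'(\alpha)=\bigl(\text{scalar}\bigr)\x_{k+1}(\alpha)+\x_{k+1}(\alpha)-\x_k$ before applying \eqref{xTg(x)}. The ``bookkeeping'' you worry about does telescope cleanly---the $\x_k$-coefficient in $N'(\alpha)$, after the substitution $4\p_k=\tfrac{1}{\alpha}\bigl(D(\alpha)\x_{k+1}-[(2-\alpha\x_k^\T\p_k)^2-\|\alpha\p_k\|^2]\x_k\bigr)$, collapses exactly to $-\tfrac{1}{\alpha}D(\alpha)$---so either your quotient-rule calculation or the paper's product-rule version yields the identity with no further tricks.
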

\begin{proof}
Equation \eqref{x_(k+1)} means that
 $$[4+\alpha^2\|\p_k\|^2-\alpha^2(\x_k^\T\p_k)^2]\x_{k+1}(\alpha)=[(2-\alpha \x_k^\T \p_k)^2-\alpha^2 \|\p_k\|^2]\x_k+4 \alpha \p_k.$$
 Then we take derivative with respect to  $\alpha$ as follows
\begin{equation} \label{Der-Alp}
\begin{aligned}
   &2\alpha(\|\p_k\|^2-(\x_k^\T\p_k)^2)\x_{k+1}(\alpha)+[4+\alpha^2\|\p_k\|^2-\alpha^2(\x_k^\T\p_k)^2]\x'_{k+1}(\alpha)  \\
   =&[-4\x_k^\T\p_k+2\alpha (\x_k^\T\p_k)^2- 2\alpha\|\p_k\|^2]\x_k+4\p_k.
\end{aligned}
\end{equation}
By multiplying both sides of \eqref{Der-Alp} by $\alpha$ we get
\begin{equation} \label{Alp_x'(k+1)}
  \alpha \x'_{k+1}(\alpha)=\frac{-2\alpha^2(\|\p_k\|^2-(\x_k^\T\p_k)^2)}{4+\alpha^2\|\p_k\|^2-\alpha^2(\x_k^\T\p_k)^2}\x_{k+1}(\alpha)+\x_{k+1}(\alpha)-\x_k
\end{equation}
from \eqref{x_(k+1)}.
Since $\g(\x_{k+1}(\alpha))^\T\x_{k+1}(\alpha)=0,$  from \eqref{Alp_x'(k+1)} we obtain
\begin{equation*}
  \alpha f'(\alpha)=\alpha \g(\x_{k+1}(\alpha))^\T\x'_{k+1}(\alpha)=- \g(\x_{k+1}(\alpha))^\T\x_k.
\end{equation*}
\end{proof}
Since $f(\x)$ is twice continuously differentiable in the compact set $\mathbb{S}^{n-1}$ , we can find a constant $M$ such that
\begin{equation} \label{boundary}
|f(\x)|\leq M, \quad \|\g(\x)\| \leq M, \quad \text{and} \quad \|\nabla^2f(\x)\| \leq M.
\end{equation}

For a given optimization algorithm which enjoys a good ascent or descent property, it is proved that  step lengths that satisfy the  Wolfe conditions exist for a monotonous line search  in \cite[Lemma 3.1]{nocedal2006numerical}. In the following theorem we prove that Wolfe conditions are practicable for the curvilinear line search in our algorithm.
\begin{Theorem}\label{wolfe condition theorem}
If \ $0<c_1<c_2<1 $,  there exists $\alpha_k>0$ satisfying
\begin{eqnarray}
f\big(\x_{k+1}(\alpha_k)\big)&\geq &f(\x_k)+c_1 \alpha_k \g(\x_k)^\T \p_k,   \label{wolfe-1}\\
\g(\x(\alpha_k))^\T\p_k & \leq &c_2 \g(\x_k)^\T\p_k. \label{wolfe-2}
\end{eqnarray}
\end{Theorem}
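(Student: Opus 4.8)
The plan is to mimic the standard existence argument for Wolfe step lengths (as in \cite[Lemma 3.1]{nocedal2006numerical}), adapted to the curvilinear search $\alpha \mapsto \x_{k+1}(\alpha)$ on the sphere, using the identity $\alpha f'(\alpha) = -\g(\x_{k+1}(\alpha))^\T \x_k$ from \eqref{Alp_f'} to convert between the derivative in $\alpha$ and inner products with $\x_k$. First I would define the auxiliary function $\phi(\alpha) = f(\x_{k+1}(\alpha)) - f(\x_k) - c_1 \alpha \g(\x_k)^\T \p_k$, so that \eqref{wolfe-1} is the statement $\phi(\alpha_k)\geq 0$. One checks $\phi(0)=0$ and $\phi'(0) = f'(0) - c_1 \g(\x_k)^\T\p_k$; since $\x_{k+1}(0)=\x_k$ and, by differentiating \eqref{Alp_x'(k+1)} (or directly), $f'(0) = \g(\x_k)^\T \p_k$ up to a positive scalar coming from the parametrization, the sufficient ascent property \eqref{ascent property} gives $\g(\x_k)^\T\p_k > 0$ (assuming $\g(\x_k)\neq 0$; the critical-point case is trivial), hence $\phi'(0) = (1-c_1)\g(\x_k)^\T\p_k > 0$, so $\phi$ is initially strictly increasing and in particular positive for small $\alpha>0$.

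Next I would argue that the ascent term $c_1\alpha\g(\x_k)^\T\p_k$ eventually dominates the bounded quantity $f(\x_{k+1}(\alpha)) - f(\x_k)$: by \eqref{boundary} we have $|f(\x_{k+1}(\alpha)) - f(\x_k)| \leq 2M$ for all $\alpha$, while $c_1\alpha\g(\x_k)^\T\p_k \to +\infty$ as $\alpha\to\infty$ (since $\g(\x_k)^\T\p_k>0$), so $\phi(\alpha)\to -\infty$. Here a small subtlety to address is that $\x_{k+1}(\alpha)$ actually traces a great circle and returns near $-\x_k$ as $\alpha\to\infty$ rather than running off, but that only reinforces that $f(\x_{k+1}(\alpha))$ stays bounded; the linear term still diverges. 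Consequently, by continuity there is a largest $\bar\alpha>0$ with $\phi(\bar\alpha)=0$ and $\phi(\alpha)>0$ on $(0,\bar\alpha)$; at $\bar\alpha$ we have $\phi'(\bar\alpha)\leq 0$, i.e.
\begin{equation*}
f'(\bar\alpha) \leq c_1 \g(\x_k)^\T\p_k.
\end{equation*}

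Then I would extract the curvature condition \eqref{wolfe-2} at a suitable point in $(0,\bar\alpha]$. Using \eqref{Alp_f'}, the chain rule gives $\g(\x_{k+1}(\alpha))^\T\p_k = \frac{d}{d\alpha} f(\x_{k+1}(\alpha))$ up to the parametrization factor; more carefully, differentiating $f(\x_{k+1}(\alpha))$ and using \eqref{Alp_x'(k+1)} together with $\g(\x_{k+1})^\T\x_{k+1}=0$ expresses $f'(\alpha)$ as a positive multiple of $\g(\x_{k+1}(\alpha))^\T\p_k$ plus a term involving $\g(\x_{k+1}(\alpha))^\T\x_k$, which \eqref{Alp_f'} controls. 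Applying the mean value theorem to $f(\x_{k+1}(\cdot))$ on $[0,\bar\alpha]$ yields some $\alpha_k\in(0,\bar\alpha)$ with $f'(\alpha_k) = \frac{f(\x_{k+1}(\bar\alpha)) - f(\x_k)}{\bar\alpha} = c_1\g(\x_k)^\T\p_k$ (the last equality because $\phi(\bar\alpha)=0$); combining with $f'(0)=\g(\x_k)^\T\p_k$ and the relation between $f'$ and $\g(\x_{k+1})^\T\p_k$, and choosing $c_2$ with $c_1<c_2<1$, gives $\g(\x_{k+1}(\alpha_k))^\T\p_k \leq c_1\g(\x_k)^\T\p_k \leq c_2\g(\x_k)^\T\p_k$. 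Since $\alpha_k < \bar\alpha$, \eqref{wolfe-1} also holds at $\alpha_k$ because $\phi>0$ on $(0,\bar\alpha)$.

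The main obstacle I anticipate is bookkeeping the positive parametrization factor relating $f'(\alpha)$, $\g(\x_{k+1}(\alpha))^\T\p_k$, and $\g(\x_{k+1}(\alpha))^\T\x_k$ cleanly: the curvilinear path is not an arc-length or affine parametrization, so the naive identifications used in the Euclidean Wolfe lemma must be replaced by the exact formulas \eqref{Alp_f'} and \eqref{Alp_x'(k+1)}, and one must verify that the factor $\big(4 + \alpha^2\|\p_k\|^2 - \alpha^2(\x_k^\T\p_k)^2\big)^{-1}$-type coefficients appearing there are bounded away from zero on the relevant finite interval $[0,\bar\alpha]$ so that sign information transfers. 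Everything else — $\phi(0)=0$, $\phi'(0)>0$, boundedness of $f$, the mean value theorem step — is routine once that translation is in place.
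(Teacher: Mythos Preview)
Your outline follows essentially the same route as the paper's proof: show $f'(0)=\g(\x_k)^\T\p_k$ (this is exact, no extra scalar), use boundedness of $f$ to find the first crossing $\bar\alpha$ of $f(\x_{k+1}(\alpha))$ with the line $f(\x_k)+c_1\alpha\g(\x_k)^\T\p_k$, apply the mean value theorem on $(0,\bar\alpha)$ to get $\rho$ with $f'(\rho)=c_1\g(\x_k)^\T\p_k$, and then convert $f'(\rho)$ into $\g(\x_{k+1}(\rho))^\T\p_k$ via \eqref{Alp_f'} and $\g(\x_{k+1}(\rho))^\T\x_{k+1}(\rho)=0$. (Small slip: you want the \emph{smallest} positive $\bar\alpha$ with $\phi(\bar\alpha)=0$, not the largest, so that $\phi>0$ on $(0,\bar\alpha)$.)

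The genuine gap is in the conversion step, and your diagnosis of the obstacle is off. Combining \eqref{Alp_f'} with the orthogonality relation and the explicit formula \eqref{x_(k+1)} gives exactly
\[
\g(\x_{k+1}(\rho))^\T\p_k \;=\; \frac{(2-\rho\,\x_k^\T\p_k)^2-\rho^2\|\p_k\|^2}{4}\,f'(\rho),
\]
so what you need is not that this bracket is \emph{bounded away from zero} but that it is \emph{bounded above by $4$}; only then does $f'(\rho)=c_1\g(\x_k)^\T\p_k$ yield $\g(\x_{k+1}(\rho))^\T\p_k\le c_1\g(\x_k)^\T\p_k\le c_2\g(\x_k)^\T\p_k$. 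Expanding, the bracket equals $4-4\rho\,\x_k^\T\p_k+\rho^2[(\x_k^\T\p_k)^2-\|\p_k\|^2]$, and since the quadratic term is nonpositive the bound $\le 4$ holds \emph{provided} $\x_k^\T\p_k\ge 0$. This sign condition is not automatic; it comes from the specific structure of the algorithm: $\x_k^\T\g(\x_k)=0$ by \eqref{xTg(x)}, $\beta_{k-1}\ge 0$ by construction, and $\x_k^\T\dd_{k-1}=\x_k^\T(\x_k-\x_{k-1})=1-\x_k^\T\x_{k-1}\ge 0$ since both iterates are unit vectors. Without this observation the bracket can exceed $4$ (indeed can be made arbitrarily large if $\x_k^\T\p_k$ were negative and $\rho$ large), and the curvature inequality would not follow. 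This is the missing idea you need to add.
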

\begin{proof}
Let $\x(\alpha)=\x_{k+1}(\alpha)$ and $f(\alpha)=f(\x_{k+1}(\alpha)).$
From \eqref{x_(k+1)}, we have $\x_{k+1}'(0) = - \x_k^\T \p_k \x_k+\p_k,$ and
\begin{equation*}
\begin{aligned}
 f'(0)&=\left.\frac{\mathrm{d} f(\x_{k+1}(\alpha))}{\mathrm{d} \alpha}\right|_{\alpha=0} = \g(\x_{k+1}(0))^\T \x_{k+1}'(0)  \\
&=\g(\x_k)^\T (- \x_k^\T \p_k \x_k+\p_k)  = \g(\x_k)^\T \p_k.
\end{aligned}
\end{equation*}
Denote a linear function $l(\alpha)=f(\x_k)+c_1\alpha \g(\x_k)^\T \p_k.$  Then $f(0)=l(0)=f(\x_k)$ and $f'(0) > l'(0)>0$ due to $0<c_1<1$ and $\g(\x_k)^\T\p_k>0 $ in \eqref{ascent property}.
Since $f(\alpha)$ is bounded above, the graph of $f(\alpha)$ must intersect with the line $l(\alpha)$ at least once when $\alpha>0$. Suppose $\bar{\alpha}$ is the smallest intersection point, we obtain
\begin{equation}\label{intersection}
f(\x_{k+1}(\bar{\alpha}))= f(\x_{k})+c_1 \bar{\alpha} \g(\x_k)^\T \p_k.
\end{equation}
By the mean value theorem, we can find  $\rho \in (0,\bar{\alpha})$ satisfying
\begin{equation}\label{meanvalue-theorem}
\begin{aligned}
f(\x_{k+1}(\bar{\alpha}))-f(\x_k)&= \bar\alpha f'(\rho)\\
                     [\text{By} \; \eqref{Alp_f'}]     \qquad   & =-\frac{\bar{\alpha}}{\rho}\g(\x_{k+1}(\rho))^\T\x_k.
\end{aligned}
\end{equation}
On the other hand, from \eqref{xTg(x)} and \eqref{x_(k+1)} we have
\begin{equation*}
\begin{aligned}
\g(\x_{k+1}(\rho))^\T\x_{k+1}(\rho) &=\frac{[(2-\rho \x_k^\T\p_k)^2-\|\rho \p_k\|^2]\g(\x_{k+1})(\rho)^\T\x_k}{4+\|\rho \p_k\|^2-(\rho \x_k^\T\p_k)^2} \\
& \quad +\frac{4\rho \g(\x_{k+1}(\rho))^\T\p_k }{4+\|\rho \p_k\|^2-(\rho \x_k^\T\p_k)^2}\\
&=0.
\end{aligned}
\end{equation*}
Then we have
\begin{equation}\label{d_k p_k}
-[(2-\rho \x_k^\T\p_k)^2-\|\rho \p_k\|^2]\g(\x_{k+1}(\rho))^\T\x_k=4\rho \g(\x_{k+1}(\rho))^\T\p_k
\end{equation}
Combining
\eqref{meanvalue-theorem} and \eqref{d_k p_k}, we have
\begin{equation}\label{1}
 [(2-\rho \x_k^\T\p_k)^2-\|\rho \p_k\|^2][f(\x_{k+1}(\bar{\alpha}))-f(\x_k)]= 4 \bar\alpha \g(\x_{k+1}(\rho))^\T\p_k.
\end{equation}
Further, from \eqref{intersection} we obtain
\begin{equation}\label{2}
\begin{aligned}
&[(2-\rho \x_k^\T\p_k)^2-\|\rho \p_k\|^2][f(\x_{k+1}(\bar{\alpha}))-f(\x_k)] \\
=&[(2-\rho \x_k^\T\p_k)^2-\|\rho \p_k\|^2]c_1 \bar{\alpha}\g(\x_k)^\T\p_k.
\end{aligned}
\end{equation}
Combing \eqref{1} and \eqref{2} we have
\begin{equation*}
  4 \g(\x_{k+1}(\rho))^\T\p_k =[(2-\rho \x_k^\T\p_k)^2-\|\rho \p_k\|^2]c_1 \g(\x_k)^\T\p_k
\end{equation*}
Since \begin{equation*}
\begin{aligned}
\x_k^\T\p_k&=\x_k^\T(\g(\x_k)+\beta_{k-1}\dd_{k-1})\\
&=\beta_{k-1}\x_k^\T(\x_k-\x_{k-1})\\
&=\beta_{k-1}(1-\x_k^\T\x_{k-1})\\
&\geq 0
\end{aligned}
\end{equation*}
and $|\x_k^\T\p_k| \leq \|\p_k\|$, we have
\begin{equation*}
\begin{aligned}
(2-\rho \x_k^\T\p_k)^2-\|\rho \p_k\|^2&=4-4\rho \x_k^\T\p_k+(\rho \x_k^\T\p_k)^2-\|\rho \p_k\|^2 \\
&\leq 4-4\rho \x_k^\T\p_k\\
& \leq 4.
\end{aligned}
\end{equation*}
Since $\g(\x_k)^\T\p_k \geq 0$,
\begin{equation}
\g(\x_{k+1}(\rho))^\T\p_k \leq c_1 \g(\x_k)^\T\p_k.
\end{equation}
Since $c_2>c_1$,  inequality  \eqref{wolfe-2} holds when $\alpha_k=\rho.$ Also from the condition $\rho \in (0, \bar{\alpha})$, we have $f(\alpha_k)>l(\alpha_k)$ and \eqref{wolfe-1} is obtained.
\end{proof}

\renewcommand{\thealgorithm}{CSRH }
\begin{algorithm}[tb!]
\caption{Computing $p$-spectral radius of a hypergraph}\label{alg}
\label{Algorithm}
\begin{algorithmic}[1]
  \STATE
For a uniform hypergraph $G$,  $p>0$,  choose parameters $ 0<c_1<c_2<1 $, $\frac{1}{4}<\tau<1$ , $\epsilon>0,$   an initial unit point $\x_0, $ and $k \gets 0$. Calculate $\p_0=\g(\x_0).$
 \WHILE{the sequence of iterates does not converge}
    \STATE  Use interpolation method to find $\alpha_k$
       such that \eqref{wolfe-1} and\eqref{wolfe-2} hold.

    \STATE Update the new iterate $\x_{k+1}=\x_{k+1}(\alpha_k)$ by \eqref{x_(k+1)}.
    \STATE Compute $\dd_k$, $\g(\x_{k+1})$ , $\beta_k,$  and $\p_{k+1}$  by \eqref{Direction p_k} .
   \STATE $k \gets k+1.$
     \ENDWHILE
\end{algorithmic}
\end{algorithm}
Up to now, the algorithm CSRH for computing the $p$-spectral radius of a hypergraph is available. First we transform the original model of $\lambda^{(p)}(G)$ into an equivalent constrained optimization problem on the unit sphere \eqref{sperical constrained model}. To solve the constrained model, we compute the ascent direction $\p_k$ from \eqref{gradient}, \eqref{beta} and \eqref{Direction p_k}, and choose a proper $\alpha_k$ so that the next iterate gained via \eqref{x_(k+1)} satisfies the Wolfe conditions \eqref{wolfe-1} and \eqref{wolfe-2}.
A fast computation method for calculating  $\mathcal{A} \x^r$ and $\mathcal{A} \x^{r-1}$ was proposed  in \cite{chang2016computing}, which improves the efficiency of products of adjacency tensor and vector.  We also adopt this technique in our algorithm.

\section{Convergence analysis}
In this section  we  prove that the CSRH algorithm converges to a stationary point of $f(\x)$ and touches the exact $p$-spectral radius with a high probability. Our CSRH algorithm terminates  finitely when there exits a constant $c$ such that $\g(\x_c)=0.$ The following  convergence  analysis is for the case that the sequence $\{\x_k\}$ is infinite and $\g(\x_k)$ is always a nonzero vector.

\subsection{ Convergence results}
Next theorem shows that CSRH algorithm is  convergent.
\begin{Theorem}\label{GradConv}
Suppose the sequence $\{\x_k\}$ is generated by the algorithm CSRH from any $\x_0 \in \mathbb{S}^n$. Then we have
$$\lim_{k\rightarrow \infty} \|\g(\x_k)\|=0.$$
\end{Theorem}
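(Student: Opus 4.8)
The plan is to run the standard Zoutendijk-type argument for line-search methods, adapted to the curvilinear search on $\mathbb{S}^{n-1}$. First I would record that the sequence of function values $\{f(\x_k)\}$ is monotonically nondecreasing: by the first Wolfe condition \eqref{wolfe-1} together with the sufficient ascent property \eqref{ascent property}, we have $f(\x_{k+1}) - f(\x_k) \ge c_1 \alpha_k \g(\x_k)^\T \p_k \ge c_1 (1 - \tfrac{1}{4\tau}) \alpha_k \|\g(\x_k)\|^2 \ge 0$. Since $f$ is bounded above by $M$ on the compact set $\mathbb{S}^{n-1}$ (see \eqref{boundary}), the sequence $\{f(\x_k)\}$ converges, hence $\sum_k \alpha_k \|\g(\x_k)\|^2 < \infty$, and in particular $\alpha_k \|\g(\x_k)\|^2 \to 0$.

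Next I would extract a usable lower bound on the step length $\alpha_k$. The second Wolfe condition \eqref{wolfe-2} gives $\big(\g(\x_{k+1}(\alpha_k)) - \g(\x_k)\big)^\T \p_k \le (c_2 - 1)\g(\x_k)^\T \p_k < 0$. The left side I would bound above using a Lipschitz-type estimate on $\alpha \mapsto \g(\x_{k+1}(\alpha))$: since $f$ is twice continuously differentiable on the compact sphere with $\|\nabla^2 f\| \le M$, and since the curvilinear path $\x_{k+1}(\alpha)$ defined in \eqref{x_(k+1)} has derivative controlled — from \eqref{Eqx_(k+1)-x_k} one reads off $\|\x_{k+1}(\alpha) - \x_k\| \le \|\alpha\p_k\| \le \alpha M_0 M$ using \eqref{bounded p_k} and \eqref{boundary} — one gets $\big(\g(\x_{k+1}(\alpha_k)) - \g(\x_k)\big)^\T \p_k \le L\,\alpha_k \|\p_k\|^2$ for a constant $L$ depending only on $M, M_0$. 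Combining with the Wolfe-2 inequality and \eqref{bounded p_k} yields
\begin{equation*}
\alpha_k \ge \frac{(1-c_2)\,\g(\x_k)^\T\p_k}{L\|\p_k\|^2} \ge \frac{(1-c_2)(1 - \tfrac{1}{4\tau})}{L M_0^2}.
\end{equation*}
Thus $\alpha_k$ is bounded below by a positive constant independent of $k$.

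Feeding this lower bound back into $\alpha_k \|\g(\x_k)\|^2 \to 0$ forces $\|\g(\x_k)\|^2 \to 0$, which is the claim. The main obstacle I anticipate is the second step: making rigorous the estimate $\big(\g(\x_{k+1}(\alpha)) - \g(\x_k)\big)^\T \p_k \le L\alpha\|\p_k\|^2$. One must differentiate (or apply the mean value theorem to) the composite map $\alpha \mapsto \g(\x_{k+1}(\alpha))$ along the curve \eqref{x_(k+1)}; this requires uniform control of both $\nabla^2 f$ along the path and of $\|\x_{k+1}'(\alpha)\|$, the latter coming from differentiating \eqref{x_(k+1)} — the expression \eqref{Alp_x'(k+1)} together with $\|\x_{k+1}(\alpha)\|=1$ and $\|\alpha\p_k\|\le \alpha M_0 M$ should deliver $\|\x_{k+1}'(\alpha)\| \le C\|\p_k\|$ for $\alpha$ in the relevant bounded range, but care is needed to ensure the denominator $4 + \|\alpha\p_k\|^2 - (\alpha\x_k^\T\p_k)^2 \ge 4$ stays bounded away from zero (which it does, since it is $\ge 4$ by the inequality $(2-\alpha\x_k^\T\p_k)^2 - \|\alpha\p_k\|^2 \le 4$ established in the proof of Theorem~\ref{wolfe condition theorem}). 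Once these uniform bounds are in place, the Zoutendijk argument closes routinely, and the structure parallels the convergence proofs in \cite{chang2016computing,chen2015computing}.
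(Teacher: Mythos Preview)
Your proposal is correct and follows essentially the same route as the paper's proof: combine the second Wolfe condition with a Lipschitz bound on $\nabla f$ along the curvilinear step to get a uniform positive lower bound on $\alpha_k$, then feed this into the first Wolfe condition and telescope using boundedness of $f$ on $\mathbb{S}^{n-1}$. The paper packages the first part as the classical Zoutendijk condition $\sum_k \cos^2\varphi_k\,\|\nabla f(\x_k)\|^2 < \infty$ together with a separate angle bound $\cos\varphi_k \ge C_0$ coming from \eqref{ascent property} and \eqref{bounded p_k}, whereas you merge these into the single lower bound $\alpha_k \ge \alpha_{\min}$; the content is identical.

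One remark: your anticipated obstacle is not real. You do not need to differentiate the curve or control $\|\x_{k+1}'(\alpha)\|$. The displacement estimate $\|\x_{k+1}(\alpha_k)-\x_k\| \le C\alpha_k\|\p_k\|$ follows directly either from \eqref{Eqx_(k+1)-x_k} (as you note) or, as the paper does, from $\|W_k\|\le \alpha_k\|\p_k\|$ and \eqref{SkewSyAppe}, and then Lipschitz continuity of $\nabla f$ on the compact sphere (immediate from the Hessian bound in \eqref{boundary}) gives $\big|(\nabla f(\x_{k+1})-\nabla f(\x_k))^\T\p_k\big| \le L\alpha_k\|\p_k\|^2$ with no further work.
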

\begin{proof}
The  demonstration is divided into two steps. First, we show that the Zoutendijk condition holds, i.e.,
\begin{equation}\label{Zoutendijk}
\sum_{k=0}^{\infty} \cos^2 \varphi_k\|\g(\x_k)\|^2 < \infty.
\end{equation}
Here $\varphi_k$ is the angle between $\g(\x_k)$ and $\p_k$, which is denoted as $$\varphi_k \equiv \arccos \frac{\g(\x_k)^\T\p_k}{\|\g(\x_k)\|\|\p_k\|}.$$
Since $\nabla^2 f(\x)$ is bounded, we have $\g(\x)$ is Lipschitz continuous on $\mathbb{S}^{n-1}$, i.e.,
\begin{equation} \label{Lipschitz}
\|\g(\x_1)-\g(\x_2)\| \leq L\|\x_1-\x_2\|  \qquad  \forall \, \x_1,\x_2 \in \mathbb{S}^n
\end{equation}
for a constant $L>0$.
From \eqref{EquaW_k}, we have
 $$\|W\|=\| \frac{\alpha_k}{2}(\x_k\p_k^\T-\p_k\x_k^\T)\| \leq \frac{\alpha_k}{2}(\|\x_k\p_k^\T\|+\|\p_k\x_k^\T\|)\leq \alpha_k\|\p_k\|.$$
Hence from \eqref{SkewSyAppe}
\begin{equation}\label{Zou_2}
\|\x_{k+1}-\x_k\|\leq\|W_k\|  (\|\x_{k+1}\|+\|\x_k\|) \leq 2\alpha_k \|\p_k\|.
\end{equation}
From \eqref{Lipschitz} and \eqref{Zou_2}, we have
\begin{equation*}
(\g(\x_k)-\g(\x_{k+1}))^\T\p_k \leq L\|\x_{k+1}-\x_k\|\|\p_k\| \leq 2L \alpha_k \|\p_k\|^2.
\end{equation*}
From \eqref{wolfe-2}, we obtain
\begin{equation}\label{Zou_1}
(\g(\x_{k+1})-\g(\x_k))^\T\p_k \leq (c_2-1)\g(\x_k)^\T\p_k.
\end{equation}
By using the above two relations, we can derive the inequality
\begin{equation*}
(1-c_2)\g(\x_k)^\T\p_k \leq 2L \alpha_k \|\p_k\|^2,
\end{equation*}
which implies
\begin{equation}\label{alphageq}
 \alpha_k \geq \frac{1-c_2}{2L}\frac{\g(\x_k)^\T\p_k}{\|\p_k\|^2}.
\end{equation}
Then from \eqref{wolfe-1}, we obtain
\begin{equation*}
  f(\x_{k+1})-f(\x_k)\geq \frac{c_1(1-c_2)}{2L} \frac{(\g(\x_k)^\T\p_k)^2}{\|\p_k\|^2}= \frac{c_1(1-c_2)}{2L} \cos^2\varphi_k \|\nabla f(\x_k)\|^2,
\end{equation*}
which derives the following inequality
\begin{equation*}
  f(\x_{k+1})-f(\x_0)=\sum_{i=0}^{k} f(\x_{i+1})-f(\x_{i})\geq \frac{c_1(1-c_2)}{2L} \sum_{i=0}^{k} \cos^2\varphi_i \|\nabla f(\x_i)\|^2.
\end{equation*}
Since $f(\x)$ is bounded in \eqref{boundary}, the inequality \eqref{Zoutendijk} is then deduced.

Next, we show that the angle $\varphi_k$ is bounded away from $\frac{\pi}{2}$. By combining \eqref{ascent property} and \eqref{bounded p_k}, we obtain
\begin{equation} \label{UbounofAngle}
\begin{aligned}
\frac{\g(\x_k)^\T\p_k}{\|\g(\x_k)\|\|\p_k\|}\geq(1-\frac{1}{4\tau})\frac{\|\g(\x_k)\|}{\|\p_k\|}\geq\frac{1}{M_0}(1-\frac{1}{4\tau})\equiv C_0.
\end{aligned}
\end{equation}
The above inequalities indicate that $$\cos \varphi_k \geq C_0>0.$$
Therefore, from \eqref{Zoutendijk} we have
\begin{equation*}
\lim_{k\rightarrow \infty} \|\g(\x_k)\|=0.
\end{equation*}

\end{proof}
Recall that the graph of a function $h(\x)$ is defined as $$\text{Gr} \, h:=\{(\x,\lambda)\in \R^n \times \R: f(\x)=\lambda\}. $$
For the function $f(\x)$ involved in our problem  \eqref{sperical constrained model}, we have
$$\text{Gr} \, f=\{(\x,\lambda): [(r-1)!\Adj \x^r]^p=\lambda^p(\sum_{i}|x_i|^p)^r\},$$
where $p$ and $r$ are positive integers.
 Since $\text{Gr} \, f$ is a semialgebraic set, $f(\x)$ is a semialgebraic function and satisfies the  {\L}ojasiewicz inequality \cite{absil2005convergence,bolte2007lojasiewicz,yang2013ablock}, which means that for a critical point $\x_*$ of $f(\x)$, there exist constants $\theta \in [0,1)$ and $C_1>0$, as well as $\mathscr{U}$ being a neighbourhood of $\x_*$  such that
\begin{equation}\label{KL property}
  |f(\x)-f(\x_*)|^{\theta}\leq C_1\|\g (\x)\|
\end{equation}
for $\x \in \mathscr{U}.$
The next theorem shows that if the sequence $\{\x_k\}$ is infinite, it has a unique accumulation point.
\begin{Theorem}
  Assume the infinite sequence $\{\x_k\}$ is generated by the CSRH algorithm. Then it converges to a unique point $\x_*,$ that is,
  \begin{equation*}
    \lim_{k\rightarrow \infty}\x_k=\x_*,
  \end{equation*}
  and $\x_*$ is a first-order stationary point.
\end{Theorem}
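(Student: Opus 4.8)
The plan is to combine the gradient convergence result (Theorem~\ref{GradConv}) with the {\L}ojasiewicz inequality \eqref{KL property} to upgrade ``$\|\g(\x_k)\|\to 0$'' into genuine convergence of the sequence $\{\x_k\}$ to a single limit point. First I would invoke Theorem~\ref{GradConv} to get $\|\g(\x_k)\|\to 0$, and then use compactness of $\mathbb{S}^{n-1}$ to extract at least one accumulation point $\x_*$; by continuity of $\g$, this $\x_*$ is a first-order stationary point, i.e.\ $\g(\x_*)=0$. Since $\{f(\x_k)\}$ is monotonically nondecreasing (from the Wolfe condition \eqref{wolfe-1} together with $\g(\x_k)^\T\p_k>0$) and bounded above by \eqref{boundary}, it converges to some limit $f_*$, and by continuity $f(\x_*)=f_*$; moreover $f(\x_k)\le f_*$ for all $k$. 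The goal is then to show that the tail $\sum_k \|\x_{k+1}-\x_k\|$ is finite, which forces the whole sequence to be Cauchy and hence convergent to the unique $\x_*$.

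The heart of the argument is the standard {\L}ojasiewicz telescoping estimate. Working in the neighbourhood $\mathscr{U}$ of $\x_*$ from \eqref{KL property}, I would control the step length $\|\x_{k+1}-\x_k\|$ from above using \eqref{Zou_2}, namely $\|\x_{k+1}-\x_k\|\le 2\alpha_k\|\p_k\|$, together with the lower bound on the per-step decrease in $f$. From \eqref{wolfe-1} and \eqref{alphageq} we have $f(\x_{k+1})-f(\x_k)\ge \frac{c_1(1-c_2)}{2L}\cos^2\varphi_k\|\g(\x_k)\|^2$, and combining this with the sufficient ascent property \eqref{ascent property}, the bound \eqref{bounded p_k}, and \eqref{UbounofAngle}, one gets an inequality of the shape $\|\x_{k+1}-\x_k\|^2 \le \mathrm{const}\cdot\bigl(f(\x_{k+1})-f(\x_k)\bigr)$, hence $\|\x_{k+1}-\x_k\|\le \mathrm{const}\cdot\sqrt{f(\x_{k+1})-f(\x_k)}$. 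I would then apply the concavity of $t\mapsto (f_*-t)^{1-\theta}$ to the values $t=f(\x_k)$: setting $\Delta_k := (f_*-f(\x_k))^{1-\theta}$, the mean value theorem gives $\Delta_k-\Delta_{k+1}\ge (1-\theta)(f_*-f(\x_k))^{-\theta}\bigl(f(\x_{k+1})-f(\x_k)\bigr)$, and \eqref{KL property} bounds $(f_*-f(\x_k))^{-\theta}\ge C_1^{-1}\|\g(\x_k)\|^{-1}$. Chaining these estimates yields $f(\x_{k+1})-f(\x_k)\le \mathrm{const}\cdot\|\g(\x_k)\|\cdot(\Delta_k-\Delta_{k+1})$, and then, via $\|\x_{k+1}-\x_k\|^2\le\mathrm{const}(f(\x_{k+1})-f(\x_k))$ and the arithmetic--geometric inequality $\sqrt{ab}\le \tfrac12(a/\eta+\eta b)$, one obtains $\|\x_{k+1}-\x_k\|\le \tfrac12\|\x_k-\x_{k-1}\| + \mathrm{const}\cdot(\Delta_{k-1}-\Delta_k)$ (or a directly summable bound). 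Summing over $k$ telescopes the $\Delta$ terms, so $\sum_k\|\x_{k+1}-\x_k\|<\infty$.

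The one genuine subtlety—and the step I expect to be the main obstacle—is the bootstrapping needed to guarantee that the iterates actually \emph{stay} inside the {\L}ojasiewicz neighbourhood $\mathscr{U}$, since \eqref{KL property} is only local. The standard device is a ``capture'' argument: choose a small ball $B(\x_*,\delta)\subseteq\mathscr{U}$; because $\x_*$ is an accumulation point, some iterate $\x_{k_0}$ enters $B(\x_*,\delta/2)$ with $f(\x_{k_0})$ close enough to $f_*$ that the summed-up length bound $\sum_{k\ge k_0}\|\x_{k+1}-\x_k\|\le C\,(f_*-f(\x_{k_0}))^{1-\theta}+\tfrac12\|\x_{k_0}-\x_{k_0-1}\|$ is less than $\delta/2$; one then verifies inductively that no iterate ever leaves $B(\x_*,\delta)$, so the {\L}ojasiewicz inequality applies at every step from $k_0$ on. This requires a careful choice of $\delta$ and $k_0$ using the monotonicity of $\{f(\x_k)\}$ and the fact that $\x_{k_0}$ can be taken arbitrarily deep in the ball. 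Once the sequence is shown to be Cauchy, its unique limit must coincide with the accumulation point $\x_*$, and the stationarity $\g(\x_*)=0$ follows from Theorem~\ref{GradConv} and continuity of $\g$; since the constraint $\|\x\|_2=1$ is handled intrinsically in \eqref{sperical constrained model} via \eqref{xTg(x)}, $\x_*$ being a critical point of $f$ on $\mathbb{S}^{n-1}$ is exactly the assertion that it is a first-order stationary point of the constrained problem.
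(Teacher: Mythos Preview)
Your overall strategy is correct and coincides with the paper's: establish quantitative ascent estimates tying $f(\x_{k+1})-f(\x_k)$ to the step length and the gradient norm, then invoke the {\L}ojasiewicz inequality \eqref{KL property} to obtain summability of $\|\x_{k+1}-\x_k\|$ via a capture argument. The paper does this more tersely: it verifies the two hypotheses of Theorem~3.2 in \cite{absil2005convergence}---namely the complementary condition $[f(\x_{k+1})=f(\x_k)]\Rightarrow[\x_{k+1}=\x_k]$ (via the lower bound $\alpha_k\ge\alpha_{\min}>0$) and the primary ascent condition
\[
f(\x_{k+1})-f(\x_k)\ \ge\ \Big(1-\tfrac{1}{4\tau}\Big)\frac{c_1}{2M_0}\,\|\g(\x_k)\|\,\|\x_{k+1}-\x_k\|,
\]
derived from \eqref{wolfe-1}, \eqref{bounded p_k}, and \eqref{Zou_2}---and then cites that theorem, which packages exactly the telescoping and capture you spell out.

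There is, however, a genuine slip in the specific chain you propose. The quadratic bound $\|\x_{k+1}-\x_k\|^2 \le \text{const}\cdot(f(\x_{k+1})-f(\x_k))$ does \emph{not} follow from the estimates you invoke: from $\|\x_{k+1}-\x_k\|\le 2\alpha_k\|\p_k\|$ and $f(\x_{k+1})-f(\x_k)\ge c_1\alpha_k\,\g(\x_k)^\T\p_k$ together with \eqref{ascent property}--\eqref{bounded p_k} you only get $\|\x_{k+1}-\x_k\|^2\le 4M_0^2\alpha_k^2\|\g(\x_k)\|^2$, and turning this into your quadratic bound would require an \emph{upper} bound on $\alpha_k$, which is nowhere established (only the lower bound \eqref{alphageq} is). The correct inequality---and the one the paper uses---is the linear one $\|\x_{k+1}-\x_k\|\cdot\|\g(\x_k)\|\le \text{const}\cdot(f(\x_{k+1})-f(\x_k))$. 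With this in hand the {\L}ojasiewicz step gives $\|\x_{k+1}-\x_k\|\le \text{const}\cdot(\Delta_k-\Delta_{k+1})$ \emph{directly}, because the factor $\|\g(\x_k)\|$ cancels against the $(f_*-f(\x_k))^{-\theta}\ge C_1^{-1}\|\g(\x_k)\|^{-1}$ coming from \eqref{KL property}; no AM--GM detour is needed, and you avoid needing a bound of the form $\|\g(\x_k)\|\le C\|\x_k-\x_{k-1}\|$ (typical for proximal schemes but not available here). With this correction your capture argument goes through as written.
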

\begin{proof}
From  \eqref{alphageq}, \eqref{ascent property} and  \eqref{bounded p_k} we have
\begin{eqnarray}
  \alpha_k &\geq &\frac{1-c_2}{2L}(1-\frac{1}{4\tau})\frac{\|\g(\x_k)\|^2}{\|\p_k\|^2} \nonumber\\
            &\geq& \frac{1-c_2}{2L M_0^2}(1-\frac{1}{4\tau})\nonumber \\
            & \equiv& \alpha_{\min} > 0. \nonumber
\end{eqnarray}
Moreover, from \eqref{wolfe-1} and \eqref{ascent property}  we obtain
\begin{eqnarray}
  f(\x_{k+1})-f(\x_k)  &\geq& c_1 \alpha_k \g(\x_k)^\T\p_k \nonumber\\
&\geq& c_1 \alpha_{\min} (1-\frac{1}{4\tau}) \|\g(\x_k)\|^2 \nonumber\\
   &>& 0. \label{FStricIncr}
\end{eqnarray}
 We take no account of condition $\|\g(\x_k)\|= 0$ under which the algorithm terminates finitely.
 The above inequality indicates that
 \begin{equation}\label{Lcond1}
  [f(\x_{k+1})=f(\x_k) ]  \Rightarrow [ \x_{k+1}=\x_k].
 \end{equation}
Based on \eqref{bounded p_k}, \eqref{wolfe-1} and \eqref{Zou_2}, we have
\begin{eqnarray}
 f(\x_{k+1})-f(\x_k)&\geq&  c_1 \alpha_{k} (1-\frac{1}{4\tau})  \frac{\|\g(\x_k)\|\|\p_k\|}{M_0} \nonumber\\
          &\geq&  (1-\frac{1}{4\tau})  \frac{ c_1 }{2M_0} \|\g(\x_k)\| \|\x_{k+1}-\x_k\| \label{Lcond2}
\end{eqnarray}
From \eqref{Lcond1} and \eqref{Lcond2}, as well as the {\L}ojasiewicz inequality \eqref{KL property}, we have the conclusions hold based on \cite[Theorem 3.2]{absil2005convergence}.
\end{proof}

\subsection{Probability of obtaining the exact $p$-spectral radius}
 Due to the feasibility of {\L}ojasiewicz inequality in \eqref{KL property}, we get the probability of the CSRH method touching the true $p$-spectral radius.
\begin{Proposition}[ ]\label{ProTheo}
  Suppose CSRH algorithm is implemented from $N$ uniformly distributed initial points on $\mathbb{S}^{n-1}$ for $N$ times. We take the largest one among the results of these trails as the $p$-spectral radius of the relevant problem. The probability of getting the exact $p$-spectral radius is
$$1-(1-\zeta)^N,$$
in which $\zeta$ is a constant satisfying $\zeta\in(0,1].$
 If $N$ is large enough, the probability is high.
\end{Proposition}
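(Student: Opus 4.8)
The plan is to reduce the multi-start statement to a single-run statement via independence, and then to pin down the single-run success probability $\zeta$ using the convergence results already established. First I would observe that once we know a single run of CSRH succeeds with probability at least some fixed $\zeta\in(0,1]$, independent of the run, the $N$ trials are independent (the initial points are drawn independently and uniformly from $\mathbb{S}^{n-1}$), so the probability that \emph{at least one} of the $N$ runs returns the exact $p$-spectral radius is $1-(1-\zeta)^N$; taking the maximum over the $N$ outputs then returns $\lambda^{(p)}(G)$ whenever at least one run succeeded. The final sentence (``if $N$ is large enough, the probability is high'') is then immediate since $(1-\zeta)^N\to 0$.

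So the crux is to justify the existence of a uniform constant $\zeta>0$ such that a single run, started from a uniformly random $\x_0\in\mathbb{S}^{n-1}$, converges to a global maximizer. The route is: by Theorem~\ref{GradConv} and the subsequent theorem, every infinite run converges to a unique first-order stationary point $\x_*$ of $f$ on $\mathbb{S}^{n-1}$, and by the strict-ascent inequality \eqref{FStricIncr} the limiting value $f(\x_*)$ is at least $f(\x_0)$. Because $f$ is semialgebraic (its graph is the semialgebraic set $\text{Gr}\,f$ exhibited before \eqref{KL property}), its set of critical values is finite; hence there are only finitely many possible limit values, and in particular the non-global critical values are bounded away from $\lambda^{(p)}(G)$. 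The idea is then to show that the set of initial points $\x_0\in\mathbb{S}^{n-1}$ from which the iteration converges to the global maximizer has positive surface (Haar) measure, and to define $\zeta$ as that measure divided by the total measure of $\mathbb{S}^{n-1}$. A clean way to see positivity: the true maximizer $\hat\x$ (which exists by compactness) has $f(\hat\x)=\lambda^{(p)}(G)$; by continuity of $f$ there is an open neighbourhood $\mathscr{V}$ of $\hat\x$ on which $f(\x_0)$ strictly exceeds every non-global critical value of $f$; since $f$ is nondecreasing along the iterates and converges to a critical value $\geq f(\x_0)$, starting in $\mathscr{V}$ forces convergence to a global maximizer; and $\mathscr{V}$ has positive measure. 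Thus $\zeta\ge \operatorname{meas}(\mathscr{V})/\operatorname{meas}(\mathbb{S}^{n-1})>0$.

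The main obstacle is the measure-theoretic/semialgebraic step: one must be careful that ``the set of bad initial points is not of full measure'' and, more importantly, that the neighbourhood argument is valid — i.e.\ that there really is an open set of starting points whose entire trajectory is trapped above the second-largest critical value. This hinges on two facts that are available from the excerpt: the finiteness of critical values (from semialgebraicity / the {\L}ojasiewicz inequality machinery) and monotone ascent of $f$ along iterates \eqref{FStricIncr}. A subtle point worth flagging is that the argument only shows $\zeta>0$ qualitatively; an explicit lower bound would require controlling the basin of attraction of the global maximizer, which the paper does not attempt — and indeed the statement only claims existence of such a $\zeta$. I would therefore present the proof at the level of: (i) finitely many critical values by semialgebraicity; (ii) continuity of $f$ plus monotone ascent yields an open, hence positive-measure, set of good initializations, giving single-run success probability $\zeta>0$; (iii) independence of the $N$ draws yields $1-(1-\zeta)^N$; (iv) let $N\to\infty$.

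\begin{proof}
Because a single run of CSRH is started from a point $\x_0$ drawn uniformly at random from $\mathbb{S}^{n-1}$, it suffices to exhibit a constant $\zeta\in(0,1]$ such that one run returns $\lambda^{(p)}(G)$ with probability at least $\zeta$; the $N$ runs then use independent draws, so at least one of them succeeds with probability $1-(1-\zeta)^N$, and taking the largest of the $N$ outputs returns the exact $p$-spectral radius on that event.

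We first note that $f$ has only finitely many critical values on $\mathbb{S}^{n-1}$. Indeed, $\text{Gr}\,f$ is semialgebraic, so $f$ is a semialgebraic function, and the set of its critical values is a semialgebraic subset of $\R$ of dimension zero, hence finite; denote by $\mu$ the largest critical value strictly below $\lambda^{(p)}(G)$ (set $\mu=-\infty$ if $\lambda^{(p)}(G)$ is the only critical value). Let $\hat\x\in\mathbb{S}^{n-1}$ attain the maximum, $f(\hat\x)=\lambda^{(p)}(G)$, which exists by compactness. By continuity of $f$ there is an open neighbourhood $\mathscr{V}\subseteq\mathbb{S}^{n-1}$ of $\hat\x$ on which $f(\x)>\mu$. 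Now suppose $\x_0\in\mathscr{V}$. By Theorem~\ref{GradConv} and the following theorem, the sequence $\{\x_k\}$ converges to a first-order stationary point $\x_*$, and by \eqref{FStricIncr} the objective values $f(\x_k)$ are nondecreasing, so $f(\x_*)\ge f(\x_0)>\mu$. Since $\x_*$ is a critical point, $f(\x_*)$ is a critical value exceeding $\mu$, whence $f(\x_*)=\lambda^{(p)}(G)$; that is, $\x_*$ is a global maximizer and the run succeeds.

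Therefore the set of successful initial points contains the open set $\mathscr{V}$, which has positive surface measure. Setting
\[
\zeta=\frac{\operatorname{meas}(\mathscr{V})}{\operatorname{meas}(\mathbb{S}^{n-1})}\in(0,1],
\]
a single run succeeds with probability at least $\zeta$. Consequently, over $N$ independent runs the probability of obtaining the exact $p$-spectral radius is $1-(1-\zeta)^N$. Finally, since $0\le 1-\zeta<1$, we have $(1-\zeta)^N\to 0$ as $N\to\infty$, so the probability tends to $1$; in particular it is high once $N$ is sufficiently large.
\end{proof}
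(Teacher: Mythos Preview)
Your argument is correct and complete. The paper itself does not give a proof but simply writes ``This Proposition can be proved in the way similar to \cite[Theorem~4.9]{chang2016computing}. We omit the details.'' Your route---finiteness of critical values via semialgebraicity, then an open superlevel-set neighbourhood of a global maximizer combined with the monotone ascent \eqref{FStricIncr} to trap trajectories at the top critical value, then independence of the $N$ draws---is exactly the standard argument and is what the cited reference does.

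One cosmetic point: with your specific choice $\zeta=\operatorname{meas}(\mathscr{V})/\operatorname{meas}(\mathbb{S}^{n-1})$ you only get that the single-run success probability is \emph{at least} $\zeta$, so the $N$-run probability is $\ge 1-(1-\zeta)^N$ rather than equal to it. If you want the equality as stated, simply define $\zeta$ to be the actual single-run success probability (the normalized measure of the full basin of attraction of the global maximizers), which your argument shows is strictly positive since it contains $\mathscr{V}$.
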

\begin{proof}
 This Proposition can be proved in the way  similar to \cite[Theorem 4.9]{chang2016computing}. We omit the details.
\end{proof}

\section{Numerical experiments}
In this section, we show the performance of CSRH for computing $p$-spectral radii of both small and large scale hypergraphs. We  compare our method with several existing methods for computing eigenvalues of adjacency tensors. Examples of approximating the Lagrangian of a hypergraph are given in Subsection 2.  All experiments  are carried out by using MATLAB version R2015b and Tensor Toolbox version 2.6 \cite{TTB_Software}. The experiments in Subsections 5.1 and 5.2 are terminated when
 $$\|\g(\x)\|\leq 10^{-8}\quad \text{or}\quad \|\lambda^{(p)}-\lambda^{(p)}_*(G)\|\leq 10^{-12},$$
 where $\lambda^{(p)}$ is our computed $p$-spectral radius and $\lambda^{(p)}_*(G)$ is the exact result obtained from  theorems or conclusions in existing  literature.
The maximum iteration of CSRH is taken as 1000 for all algorithms except those performed by the MATLAB function in Tensor Toolbox. For each experiment in this section,  we compute 100 times to obtain  100 estimated values  $\lambda^{(p)}_1,\ldots, \lambda^{(p)}_{100}$ and choose the largest one as our computational result of the $p$-spectral radius related with $G$. When $\lambda^{(p)}_*(G)$ is attainable, the accuracy rate of the CSRH algorithm is defined as
\begin{equation}
  \text{Accu.} \equiv \bigg| \bigg\{ i: \frac{|\lambda^{(p)}_i-\lambda^{(p)}_*(G)|}{|\lambda^{(p)}_*(G)|} \leq 10^{-8} \bigg\}\bigg| \times 1\%.
\end{equation}
Each number of  iterations (Iter.) and computational time (Time) we reported in this section is the sum of corresponding quantities for all 100 executions of the experiment. The relative errors (Err.)  between the numerical results and the exact solutions are provided.

\subsection{Computation of $p$-spectral radii of hypergraphs}
We compare the following three algorithms for computing eigenvalues of adjacency tensors associated with different hypergraphs:
\begin{itemize}
  \item An adaptive shifted power method \cite{kolda2011shifted} SS-HOPM. This method can be invoked by \texttt{eig\_sshopm} in Tensor Toolbox 2.6 for Z-eigenvalues of symmetric tensors.
 \item A first-order optimization algorithm CEST \cite{chang2016computing} which is proposed for eigenvalues of  large scale sparse tensors involving  even order hypergraphs.
  \item CSRH: the method proposed in Section 3.
\end{itemize}

\textbf{Example 1 ($\mathbf{p=2}$).}
First, we compute the largest Z-eigenvalues of adjacency tensors of the following hypergraphs:
\[
\left\{
  \begin{array}{ll}
    G_1: & \hbox{V=\{1,2,3,4\}\quad\text{and}\quad E=\{123,234\};} \\
  G_2: & \hbox{V=\{1,2,3,4,5,6,7\}\quad\text{and}\quad E=\{123,345,567\};} \\
    G_3: & \hbox{V=\{1,2,3,4,5\}\quad\text{and}\quad E=\{123,345\};} \\
    G_4: & \hbox{V=\{1,2,3,4\}\quad\text{and}\quad E=\{123,124,134,234\}.}
  \end{array}
\right.
\]
The first hypergraph $G_1$ is given in \cite{xie2013z} as Example 1, while the last three hypergraphs are Example $4,$ $7$ and $9$ in \cite{pearson2014spectral}.
The hypergraph $G_4$ is  actually a tetrahedron. 

In Table \ref{ZEigG}, we demonstrate results of CSRH and  SS-HOPM for computing the largest Z-eigenvalues of adjacency tensors of some small hypergraphs.
Since all the four hypergraphs given above are of odd orders, the comparison does not include CEST method, which is designed for even order hypergraphs.
The Err. column  shows the relative error between the computational result and the exact largest  Z-eigenvalue  provided in the corresponding references.
Under the condition that the relative error reaches $10^{-16}$, our CSRH method is much more stable and efficient than the SS-HOPM method.
\begin{table}[ht]
\footnotesize
\centering
\begin{tabulary}{1.0\textwidth}{c|llll|llll} 
\hline
\multirow{2}{5em}{Hypergraph}& \multicolumn{4}{c|}{CSRH}   &\multicolumn{3}{c}{SS-HOPM}\\\cline{2-9}
                      & Iter.& Time(s)&Accu.  &Err.& Iter.& Time(s) &Accu. & Err. \\
\hline
  $G_1$& 13593 &3.35&1.00  & $5.44\times10^{-16}$&2668 & 4.89& 1.00& $5.44\times10^{-16}$\\
   $G_2$ & 1257& 0.78&1.00  &$3.85\times10^{-16}$ &18610 &32.58&0.94  &$3.85\times10^{-16}$\\
  $G_3$ & 674 & 0.42 &1.00 &$3.85\times10^{-16}$ &731&1.61&1.00   &$7.69\times10^{-16}$ \\
   $G_4$ & 8901 & 2.23&0.18 & $1.48\times10^{-16}$&2317 &4.38& 0.22  & $2.96\times10^{-16}$\\\hline

  \hline
\end{tabulary}
\caption{Z-Eigenvalues of  adjacency tensors of several small hypergraphs. }
\label{ZEigG}
\end{table}

In the next experiment, we study the  probability of CSRH method  getting the true largest Z-eigenvalue of $G_4$ and show that the probability increases along with the trail times.  We employ the CSRH method to compute the largest Z-eigenvalue of the adjacency tensor of $G_4$  from uniformly distributed and randomly chosen initial points. Once the relative error between the computational largest Z-eigenvalue and its exact value $3/2$ reaches $10^{-8},$ the experiment is terminated and we  record the number of trails.  This experiment is repeated for one thousand times.  Let $\sigma(i)$  be the total occurrence  of experiments whose trail time is the integer $i.$  The   frequency of touching the exact Z-eigenvalue when running $i$ times is
\begin{equation}
\nu_i= \frac{\sum_{j\leq i}\sigma(j)}{1000}.
\end{equation}
\begin{figure}[h]
\centering
  \includegraphics[width=.5\textwidth]{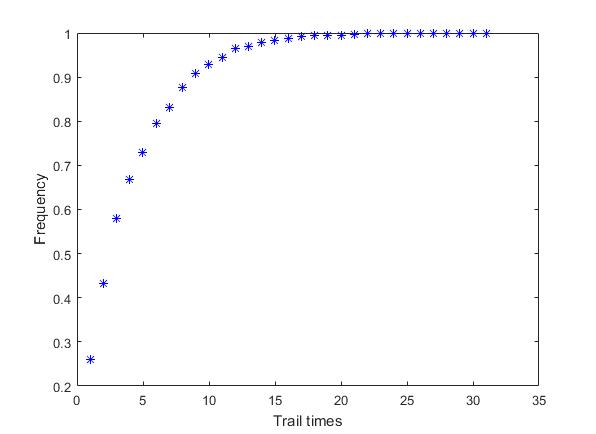}
\caption{Probability of touching the exact largest Z-eigenvalue of adjacency of $G_4$.}\label{Frequency}
\end{figure}
In Figure \ref{Frequency}, we display the relation between trail times and success probability. It illustrates that the probability tends to one along with the increase of trail times $i,$ which coincides with the conclusion in Theorem \ref{ProTheo}.

\textbf{Example 2 ($\mathbf{p=r}$).}
Next, we compare  CEST and  CSRH methods for computing the largest H-eigenvalues of  adjacency tensors of loose paths. An $r$-graph with $m$ edges is called a loose path if its vertex set is
$$V=\big\{i_{(1,1)},\ldots,i_{(1,r)},i_{(2,2)},\ldots,i_{(2,r)},\ldots,i_{(m,2)},\dots,i_{(m,r)}\big\}$$
and its edge set is
$$E=\big\{ \{i_{(1,1)},\ldots,i_{(1,r)}\},\{i_{(1,r)},i_{(2,2)},\ldots,i_{(2,r)}\},\ldots,\{i_{(m-1,r)},i_{(m,2)},\dots,i_{(m,r)}\}\big\}.$$
An $r$-uniform loose path with $m$ edges has $m(r-1)+1$ vertices. For example, the $6$-unform loose path with $4$ edges in Figure \ref{Loosepath}  has $21$ vertices.
\begin{figure}[h]
\centering

  \includegraphics[width=.6\textwidth]{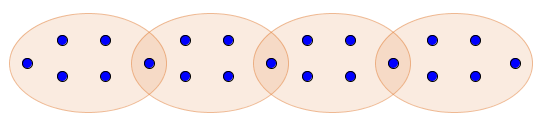}

\caption{A 6-uniform loose path with 4 edges. }\label{Loosepath}
\end{figure}
The following theorem proved in \cite{yue2016largest} offers a convenient way to acquire the largest H-eigenvalues of adjacency tensors of loose paths with $m=3$ or $m=4$.

\begin{Theorem}[\cite{yue2016largest}]\label{looPatTheor}
Let $G$ be an $r$-uniform loose path with $m$ edges and $\lambda_H(G)$ be the largest H-eigenvalue of its adjacency tensor $\mathcal{A}$. Then we have
\begin{enumerate}
  \item $\lambda_H(G)=\big(\frac{1+\sqrt{5}}{2}\big)^{\frac{2}{r}}$  for $m=3,$
  \item $\lambda_H(G)=3^{\frac{1}{r}}$ for $m=4.$
\end{enumerate}
\end{Theorem}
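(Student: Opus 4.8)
The plan is to combine the Perron--Frobenius theory for nonnegative tensors with the symmetry of the loose path, so that the H-eigenproblem collapses to a small scalar system. First I would observe that the adjacency tensor $\mathcal{A}$ of a loose path is symmetric, nonnegative and, because a loose path is connected, weakly irreducible; hence its largest H-eigenvalue $\lambda_H(G)$ equals the spectral radius of $\mathcal{A}$ and is the \emph{unique} H-eigenvalue admitting a positive H-eigenvector $\x$, with $\x$ unique up to scaling. So it suffices to exhibit this positive eigenpair. (I will assume $r\ge 3$; the case $r=2$ is the classical path and is handled the same way.) Next I would write the eigenvalue equation componentwise, using that for an adjacency tensor $(\mathcal{A}\x^{r-1})_i=\sum_{e\ni i}\prod_{j\in e\setminus\{i\}}x_j$.

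Since a positive eigenvector must be invariant under every automorphism of $G$, and the automorphism group of the loose path is generated by path reversal and by permutations of the pendant vertices inside each edge, $\x$ is constant on vertex orbits. For $m=3$ this leaves three unknowns: $a$ on the $r-1$ pendants of each end edge, $b$ on the two junction vertices, $c$ on the $r-2$ pendants of the middle edge. The eigenvalue equations at a pendant of an end edge, at a junction, and at a pendant of the middle edge become $b=\lambda a$, then $a^{r-1}+bc^{r-2}=\lambda b^{r-1}$, and $b^2=\lambda c^2$. Eliminating $a$ and $c$ and setting $\mu=\lambda^{r/2}$ turns the middle equation into $\mu^2=\mu+1$, so $\mu=\tfrac{1+\sqrt5}{2}$ and $\lambda_H(G)=\bigl(\tfrac{1+\sqrt5}{2}\bigr)^{2/r}$.

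For $m=4$ the orbits give four unknowns: $a$ on the end-edge pendants, $b$ on the outer junctions, $c$ on the pendants of the two middle edges, $d$ on the central junction. The equations at a pendant of an end edge, at an outer junction, at a pendant of a middle edge, and at the central junction are $b=\lambda a$, then $a^{r-1}+dc^{r-2}=\lambda b^{r-1}$, then $bd=\lambda c^2$, and $2bc^{r-2}=\lambda d^{r-1}$. The last two yield $(c/b)^r=2/\lambda^r$; feeding this together with $a=b/\lambda$ into the second equation collapses it to $3\lambda^{1-r}=\lambda$, i.e. $\lambda^r=3$, hence $\lambda_H(G)=3^{1/r}$.

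The hard part will be justifying that the Perron eigenvector really has the symmetric shape used above; this rests on weak irreducibility of the adjacency tensor of a connected uniform hypergraph, on uniqueness up to scaling of its positive H-eigenvector, and on the automorphism-invariance of that eigenvector. An alternative that sidesteps uniqueness is a \emph{dual-certificate} argument: prove the upper bound by applying weighted AM--GM to each edge with weights read off the candidate eigenvector, and obtain the matching lower bound from the variational identity $\lambda_H(G)=\max_{\x\ge 0,\ \|\x\|_r=1} r\,w(G,\x)$; the remaining bookkeeping is the same. The only other care needed is for the degenerate small-$r$ cases, in which middle edges carry no pendants and a shorter elimination applies.
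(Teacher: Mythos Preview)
Your argument is sound: the Perron--Frobenius reduction via weak irreducibility, the automorphism-invariance of the unique positive H-eigenvector, and the ensuing scalar eliminations all check out (including the identities $\mu^2=\mu+1$ for $m=3$ and $\lambda^r=3$ for $m=4$). However, there is nothing in the present paper to compare against: Theorem~\ref{looPatTheor} is not proved here at all but is simply quoted from Yue, Zhang and Lu~\cite{yue2016largest} and used as an exact benchmark for the numerical experiments in Table~\ref{HEigLooPa}. Your symmetry-reduction approach is in fact the one taken in that original reference, so you have essentially reconstructed the proof of the cited source rather than of this paper.
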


In Table \ref{HEigLooPa}, we compare CSRH and CEST for computing the largest H-eigenvalues of adjacency tensors of different loose paths.
The column Err. presents the relative error between our computed result  and the exact one given by Theorem \ref{looPatTheor}.
When relative error achieves precision of $10^{-16},$  the CSRH method
 saves at least $75\%$ of the time CEST takes  in every problem.  The comparison between CEST and CSRH  verifies that the high efficiency of CSRH method does not only relies on the fast computation technique in \cite{chang2016computing}, because CEST method use this technique as well.
\begin{table}[ht]
\footnotesize
\centering
\begin{tabulary}{1.0\textwidth}{c|l|llll|llll}
\hline
\multirow{2}{2em}{$m$ }&  \multirow{2}{3em}{$r$ }& \multicolumn{4}{c|}{CSRH}& \multicolumn{4}{c}{CEST} \\\cline{3-10}
    &&Iter. & Time(s)& Accu.& Err.&Iter. & Time(s)& Accu. & Err. \\\hline
\multirow{4}{2em}{3} &4&38123&9.14& 1.00&$3.49\times10^{-16}$ & 42760&70.28&1.00&$3.49 \times10^{-16}$ \\
                     &6&62780&17.55& 0.97&$5.67\times10^{-16}$ & 65706&105.53&0.99 &$7.56\times10^{-16}$\\ 
                      &8&71311&23.38&0.66 & $3.94\times10^{-16}$ & 76778&106.95& 0.65&$7.88\times10^{-16}$\\\hline

\multirow{4}{2em}{4} &4&69517&16.92&1.00 &$5.06\times10^{-16}$& 49331&79.81& 1.00&$5.06\times10^{-16}$\\
                    &6&86171&24.83& 0.96&$5.55\times10^{-16}$& 76105&113.11& 0.98& $5.55\times10^{-16}$ \\
                      &8&75907&24.71&0.33 &$7.74\times10^{-16}$& 91690&106.57& 0.42 &$9.68\times10^{-16}$\\\hline
\end{tabulary}
\caption{H-eigenvalues of adjacency tensors of loose paths. }
\label{HEigLooPa}
\end{table}

\textbf{Example 3.}
If all edges of a hypergraph share a same vertex, then it is called a $\beta$-star. An $r$-uniform $\beta$-star with $m$ edges have $m(r-1)+1$ vertices.
\begin{figure}[h]
\centering

  \includegraphics[width=.35\textwidth]{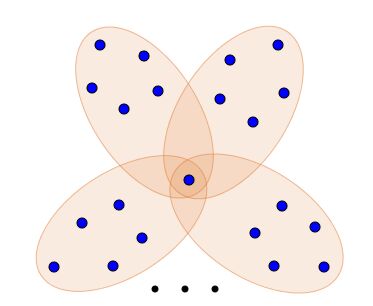}

\caption{A $6$-uniform $\beta$-star.  }\label{Betastar}
\end{figure}
We present a class of  $6$-uniform $\beta$-star  in Figure \ref{Betastar} as an example.
\begin{table}
\centering
\begin{tabular}{cccc}
\scriptsize
 \begin{tabulary}{1.0\textwidth}{l|llll} 
 \hline
\multirow{2}{2em}{n}&\multicolumn{4}{c}{ $p=3,\, r=3 \, \, (p> r-1)$}    \\\cline{2-5}
                     &Iter. & Time(s) &Accu. & Err. \\\hline
 21  & 1835& 0.34&1.00 &$5.38\times10^{-16}$\\
201  & 2609& 0.60&1.00&$3.55\times10^{-15}$ \\
2,001  &3539&1.87&1.00 &$4.33\times10^{-14}$ \\
20,001  & 4475& 12.93&1.00 &$6.39\times10^{-14}$\\
200,001  & 6038&263.39&0.98&$1.93\times10^{-11}$ \\
2,000,001  & 20018& 15437.99&1.00&$1.22\times10^{-10}$ \\
\hline
\multicolumn{5}{c}{The $3$-spectral radius of $3$-uniform $\beta$-stars ( $p>r-1$ )}\\
\end{tabulary}
 &
\scriptsize
\begin{tabulary}{1.0\textwidth}{l|llll} 

 \hline
\multirow{2}{2em}{n}&\multicolumn{4}{c}{ $p=4,\, r=6 \, \, (p< r-1)$}    \\\cline{2-5}
                     &Iter. & Time(s) &Accu. & Err. \\\hline
 51  & 14747& 4.79&0.99 &$1.59\times10^{-11}$ \\
501  & 26019& 14.52&0.98&$9.56\times10^{-12}$ \\
5,001  &30108&57.82&0.99 &$2.01\times10^{-11}$ \\
50,001  & 32387& 426.60&0.95 &$1.08\times10^{-11}$ \\
500,001  &30070& 6309.58&0.99&$4.49\times10^{-11}$\\
5,000,001  & 51609& 125869.02&0.97&$2.40\times10^{-10}$ \\
\hline
\multicolumn{5}{c}{The $4$-spectral radius of $6$-uniform $\beta$-stars ( $p<r-1$ )}\\
\end{tabulary}
\end{tabular}
\caption{The $p$-spectral radius of $r$-uniform $\beta$-stars.}
\label{TabBetaStarGre}
\end{table}

We  calculate $p$-spectral radii of  $\beta$-stars with various orders and edges and display the results in Table \ref{TabBetaStarGre}. The Err. column presents the relative error between our computational result and the corresponding exact result generated from Theorem \ref{BetaStarTheor}. It can be seen that all tests   succeed with high accuracy rates. Even the $3$-spectral radii and $4$-spectral radii of $\beta$-stars with millions of vertices are gained with high probability  and efficiency.

\subsection{Approximation of Lagrangians of  hypergraphs }

When $p=1,$ the $1$-spectral radius is also known as the Lagrangian of a hypergraph \eqref{lagrangian}. However,  $f(\x)$ is not  smooth at $\x$ who has  some  zero elements. We use $\lambda^{(p_\vartheta)}(G)$ to approximate $\lambda^{(1)}(G)$, with $p_\vartheta$ being denoted as
 \begin{equation}\label{p_j}
p_\vartheta=1+\frac{1}{2\vartheta+1}, \,\,\text{for} \,\, \vartheta=1,2,\ldots.
\end{equation}
Since $\lim_{\vartheta\rightarrow\infty}p_\vartheta=1,$ we have $\lim_{\vartheta\rightarrow\infty} \lambda^{(p_\vartheta)}(G)=\lambda^{(1)}(G)$ from Proposition \ref{PropoLimiPn}.
Therefore, we can use $p_\vartheta$-spectral radius to approximate the Lagrangian of a hypergraph.
The function $f_{p_\vartheta}(\x)$ is continuous and differentiable and CSRH method is  feasible for computing $p_\vartheta$-spectral radius  of a uniform hypergraph.
Let $\w$ be a vector such that its $i$th element being
$$w_i=x_i^{\frac{1}{2\vartheta+1}}, \quad \text{for}\,\, i= 1,\ldots,n.$$
Then  function $f_{p_\vartheta}(\x)=f_{p_\vartheta}(\w^{[2\vartheta+1]})$ is also a semialgebraic function and satisfies the {\L}ojasiewicz inequality \eqref{KL property}. Therefore, the conclusions in Section 4 hold for $p_\vartheta$ in \eqref{p_j}.

In this subsection, we show the results of CSRH method approximating Lagrangian of a hypergraph. First we give an example to demonstrate that the CSRH method  is competent to compute the $p$-spectral radius of a uniform hypergraph when $p$ is a fraction in \eqref{p_j}. Next,  the numerical results of  approximating the Lagrangians of complete hypergraphs by $p_\vartheta$-spectral radius are represented. The termination criteria of algorithms in the remaining part of this paper is set as $\|\g(\x)\| \leq 10^{-6}.$

\begin{table}[ht]
\footnotesize
\centering
\begin{tabulary}{1.0\textwidth}{l|llll} 
\hline
$p_n$& Iter.& Time(s)& Accu. & Err. \\
\hline
    $p_\vartheta=\frac{12}{7}$ & 3037 & 0.99 & 1.00&  $0.00$\\
   $p_\vartheta=\frac{14}{9}$ & 13271 & 17.88 & 1.00 &  $3.08\times10^{-16}$\\
   $p_\vartheta=\frac{10}{7}$ & 51018&110.53&1.00&$1.85\times10^{-16}$\\
    $p_\vartheta=\frac{4}{3}$& 84848 &88.85 & 1.00&$3.07\times10^{-14}$\\
  \hline
\end{tabulary}
\caption{$p_\vartheta$-spectral radius of   3-uniform $\beta$-star with 10 edges. }
\label{p_jBetaStarTab}
\end{table}
In Table \ref{p_jBetaStarTab}, we present the consequences of the $p_\vartheta$-spectral radius of a 3-uniform $\beta$-star with 10 edges, with $p_\vartheta$ being the fraction in the first column.  The true $p_\vartheta$-spectral radius can be acquired from Theorem \ref{BetaStarTheor}. All experiments produce the exact $p_\vartheta$-spectral radius with probability $1$ and the relative error between our numerical result and the theoretical value obtained from Theorem \ref{BetaStarTheor} is at most $3.07\times 10^{-14}.$

An $r$-uniform hypergraph  is said to be complete if it contains all possible edges when the number of its vertices is fixed. We use $C_n^r$ to denote a complete $r$-graph with $n$ vertices. Then  the 3-graph $C_4^3$ is actually  a tetrahedron with 6 edges.
The Lagrangian of  a complete uniform hypergraph
can be obtained directly from  Proposition \ref{PropLagran}.

\begin{figure}[h]
\centering
  \includegraphics[width=.5\textwidth]{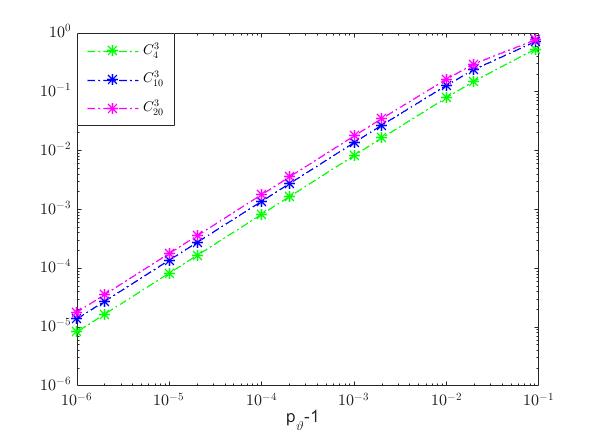}
\caption{Approximation of Lagrangian of complete hypergraphs.}\label{FigCompLagran}
\end{figure}
We compute different  $p_\vartheta$-spectral radii of 3 complete hypergraphs $C_4^3$, $C_{10}^3$ and $C_{20}^3$.  In Figure \ref{FigCompLagran}, the ordinate  reflects the error between the $p_\vartheta$-spectral radius and the true Lagrangian of the corresponding complete hypergraph which is obtained from the Proposition \ref{PropLagran}, while the abscissa means the value of $p_\vartheta-1.$  When $p_\vartheta$ approaches to $1,$ the $p_\vartheta$-spectral radius is  close to the  exact Lagrangian of the related hypergraph.

\section{Network analysis}

Not only the $p$-spectral radii, i.e., the optimal value of $f(\x)$ in \eqref{sperical constrained model}, but also the optimal point $\x$ in \eqref{sperical constrained model} characterize the structure of hypergraphs. Recall \eqref{Origi_pSpec} that an optimal point is called a $p$-optimal weighting.  The elements of the $p$-optimal weighting reflect the importance of the corresponding vertices in the hypergraph. Therefore, we may call the $i$th element of the $p$-optimal weighting the impact factor of the $i$th vertex. Different selections of the parameter $p$ provide different criteria of the importance of the vertices.
When $p$ is relatively large, the criterion tends to evaluate the importance of vertices more individually. When $p$ is relatively small, the ranking result demonstrates the significance of groups of vertices. In this section, we compute each $p$-spectral radius  10 times and choose the vector corresponding to the largest $f(\x)$ value as the $p$-optimal weighting.

 \subsection{A toy problem}

\begin{figure}[!tb]
\begin{center}
  \includegraphics[width=.7\textwidth]{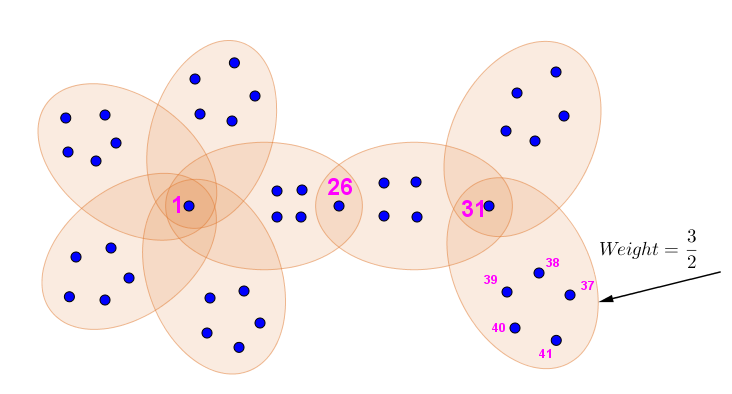}
  \caption{A 6-uniform hypergraph.}\label{BetaStarAppl}
\end{center}
\end{figure}

  We first employ a toy problem to illustrate the impact of the selections of $p$. We construct a 6-uniform weighted hypergraph with 8 edges as in Figure \ref{BetaStarAppl}. The weights of all edges of this hypergraph are set as $1$, except the last one whose weight is $\frac{3}{2}$.
  Obviously from the hypergraph, the vertices numbered $1$, $31$, and $26$ are distinct from other vertices, and the edge $\{31,37,38,39,40,41\}$ is also distinct from other edges. In Table \ref{Ranking1}, we show the different ranking of vertices via different $p$-optimal weighting. The abbreviation Num. means the number of a vertex and Val. represents the  impact factors of the corresponding vertices.

  When $p=\frac{4}{3},$ the top $6$ vertices are in the  edge who has the only largest weight among all edges. From Table \ref{Ranking1}, we can see that the impact factor of the top $6$ vertices in the $\frac{4}{3}$-optimal weighting are much greater than others. In fact, the value of all impact factors, except those corresponding to the top $6$ vertices, are  less than $5\times 10^{-10},$  which means that the dominant vertices are the ones from the largest weighted edge and the others can be ignored. That is to say, the ranking in this case offers the most important group of the vertices. When $p=5$, the vertex numbered $26$ appears in the top 10 list and the difference among the top $10$ impact factors is not as great as that when $p=\frac{4}{3}$. When $p=16$, the top $3$ vertices are $1$, $31$, $26,$ and the impact factors of vertices that have same status in the hypergraph are  rather close to each other. Then, we believe that the ranking results of $16$-spectral radius reflects the significance of vertices individually.

\begin{table}
\footnotesize
\centering
\begin{tabulary}{1.0\textwidth}{c|ll|ll|ll}
  \hline
\multirow{2}{3em}{Ranking} & \multicolumn{2}{c|}{$p=\frac{4}{3}$} & \multicolumn{2}{c|}{$p=5$} & \multicolumn{2}{c}{$p=16$}\\
\cline{2-7}
                       & Num. & Val. & Num. & Val. & Num. & Val. \\
\hline
 1 & 39 &  0.4082483175 & 41 & 0.4081204985 & 1&   0.1709715830  \\
 2 & 38 & 0.4082482858 & 39 & 0.4081204985 & 31 & 0.1678396311 \\
 3 & 31 & 0.4082482855 & 31 & 0.4081204983 & 26 &  0.1618288319  \\
 4 & 41 & 0.4082482854 & 38 & 0.4081204982 & 39 & 0.1600192388  \\
 5 & 40 & 0.4082482849 & 40& 0.4081204973 & 38 &  0.1600192387  \\
 6 & 37 & 0.4082482834 & 37 & 0.4081204958 & 41 & 0.1600192387  \\
 7 & 24 &  0.0000000000 & 28 & 0.0073198868 & 40 & 0.1600192386  \\
 8 & 34 &  0.0000000000 & 30 & 0.0073192175 & 37 & 0.1600192385 \\
 9 & 23 &  0.0000000000 & 26 & 0.0073061265 & 23 &  0.1550865094  \\
 10 & 3 &  0.0000000000 & 29 & 0.0071906282 & 22 & 0.1550865094  \\

\hline
\end{tabulary}
\caption{Top ten vertices in Figure \ref{BetaStarAppl}. }
\label{Ranking1}
\end{table}

\subsection{Author ranking}
\citeauthor{ng2011multirank}  in \cite{ng2011multirank} collected publication information  from DBLP\footnote{http://www.informatik.uni-trier.de/ ley/db/} and gave different rankings of the authors according to different factors, such as citations of authors, category concepts, collaborations, and papers.
In this subsection, we use the same data set in \cite{ng2011multirank} and rank the authors based on their collaborations.\footnote{We would like to thank Dr. Xutao Li for providing the database.}

We construct a weighted 3-uniform multi-hypergraph $G_A$ with $1,243,443$ edges to store the cooperation information. The vertex set is composed of  numbers of the 10305 authors and each edge has 3 vertices indicating that these three authors have cooperations under a same topic.   The weight of an edge is decided by the collaboration times among the three authors in this edge. The adjacency tensor of this multi-hypergraph $G_A$ is a sparse tensor with $1.17\%$ nonzero entries.

The example in Subsection $6.1$ shows that we can obtain the ranking score from different viewpoints by computing  different $p$-optimal weighting. Therefore, we compute  $2$-optimal weighting and $12$-optimal weighting of $G_A$ to get the  author group ranking and the author ranking respectively. In Figure \ref{OptiPoi}(a), the stars stand for the $2$-optimal impact factors of  vertices of $G_A.$  Obviously, the majority elements of $2$-optimal weighting are extraordinarily close to zero and only dozens of corresponding stars  are above the horizontal line of $y=0.1.$  In fact, $97.2\%$ of the entries in the $2$-optimal weighting are less than $10^{-3}$ and the elements that are greater than $0.1$ occupy only $1.8\%.$  On the other hand,  the largest impact factor reaches to $0.4481$ and the upper stars are considerably larger than others. It means that the $2$-optimal weighting is dominated by a small proportion of its components and we regard these leading elements as a group. The top ten authors ranked according to the 2-optimal impact factor are presented in the second column of Table \ref{AuthorRank}. The average collaboration times of each two authors among these top ten authors are $8.533,$ which is far larger than $9.76\times 10^{-4}$,  the average collaboration times of each two authors among the whole $10305 $ authors. Since these top ten authors have intimate cooperation, it is rational to consider them as a group and interpret the ranking in the second column as the most powerful group.

Stars in Figure \ref{OptiPoi}(b) are  the  $12$-optimal impact factors of vertices of $G_A.$ The distribution of these stars is totally different from the  ones in Figure \ref{OptiPoi}(a). It can be seen in Figure \ref{OptiPoi}(b) that the $12$-optimal  impact factors of the $10305$ authors are uniform and most of them are concentrated in the internal between $0.006$ and $0.014.$  Because in the original data set, the collaboration times of different authors are mostly one or two and we rank the authors based on their collaborations, the balance and concentration of the impact factors match up with the cooperation information.  The top ten authors generated via the $12$-optimal impact factors are listed in the third column of Table \ref{AuthorRank}. \citeauthor{ng2011multirank} also ranked the authors in the light of collaboration times and the influence of category concepts of their publications. We demonstrate  the top 10 authors of their experimental result \cite{ng2011multirank} in the MultiRank column in Table \ref{AuthorRank}. It can be seen that $6$ of the top $10$ authors in the  MultiRank are coincident with results of our $12$-optimal rank.

\begin{table}
\footnotesize
\centering
\begin{tabulary}{1.0\textwidth}{|c|l|l|l|}
  \hline
\multirow{2}{3em}{Ranking} & \multicolumn{3}{c|}{Author Name} \\
\cline{2-4}
                       & $p=2$ & $p=12$& MultiRank \\
\hline
 1 & Zheng Chen & Wei-Ying Ma & C. Lee Giles \\
 2 &Wei-Ying Ma &Zheng Chen &Philip S. Yu \\
 3 & Qiang Yang & Jiawei Han & Wei-Ying Ma\\
 4 & Jun Yan & Philip S. Yu   & Zheng Chen\\
 5 & Benyu Zhang & C. Lee Giles  & Jiawei Han\\
 6 & Hua-Jun Zeng & Jian Pei   & Christos Faloutsos\\
 7 & Weiguo Fan &  Christos Faloutsos  & Bing Liu\\
 8 & Wensi Xi &  Yong Yu     & Johannes Gehrke\\
 9 & Dou Shen &  Qiang Yang & Gerhard Weikum\\
 10 & Shuicheng Yan &  Ravi Kumar   & Elke A. Rundensteiner\\

\hline
\end{tabulary}
\caption{Top 10 authors.}
\label{AuthorRank}
\end{table}

\begin{figure}[!tb]
\begin{center}
\begin{tabular}{cccc}
  \includegraphics[width=.5\textwidth]{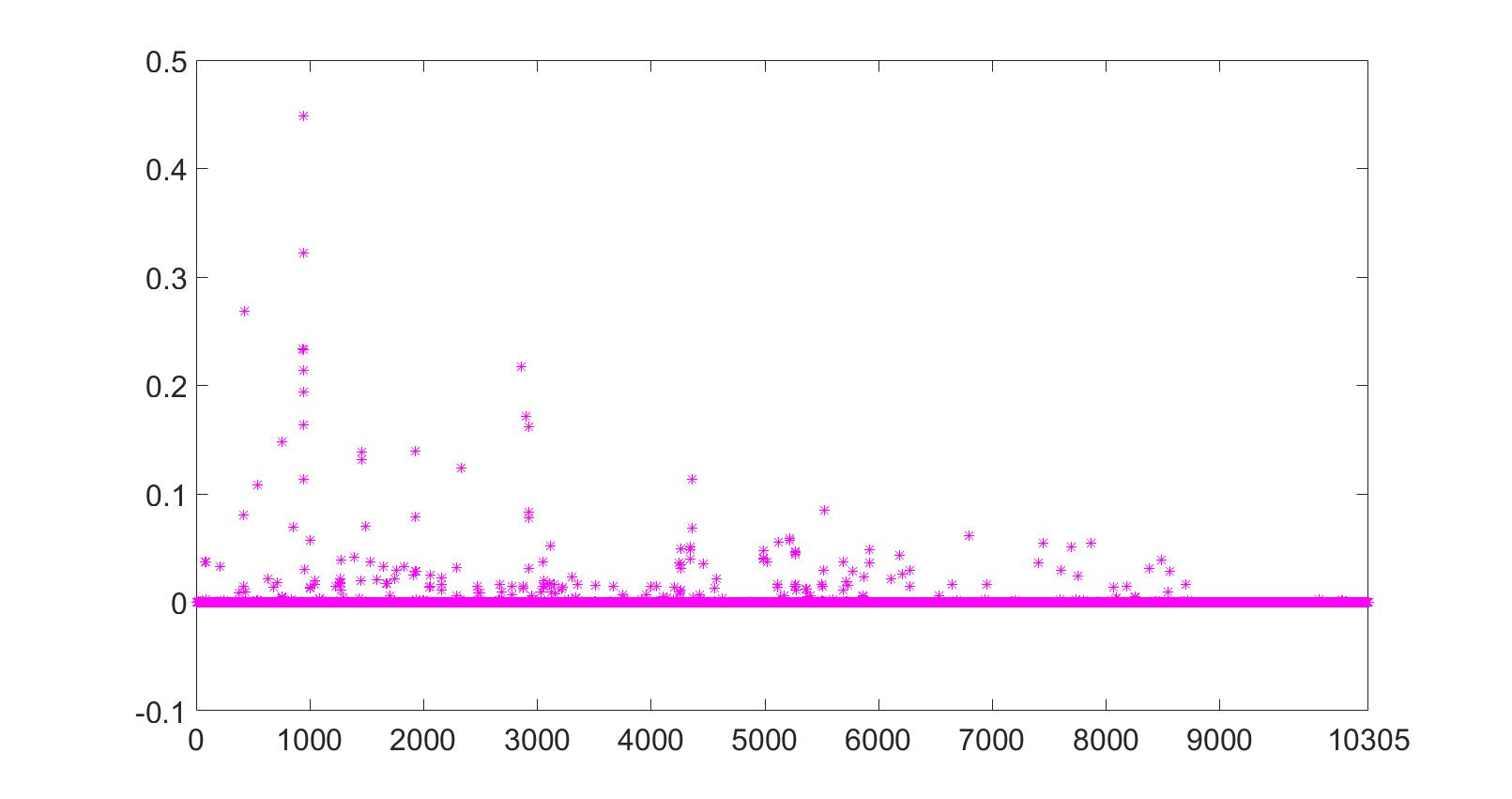} &
    \includegraphics[width=.5\textwidth]{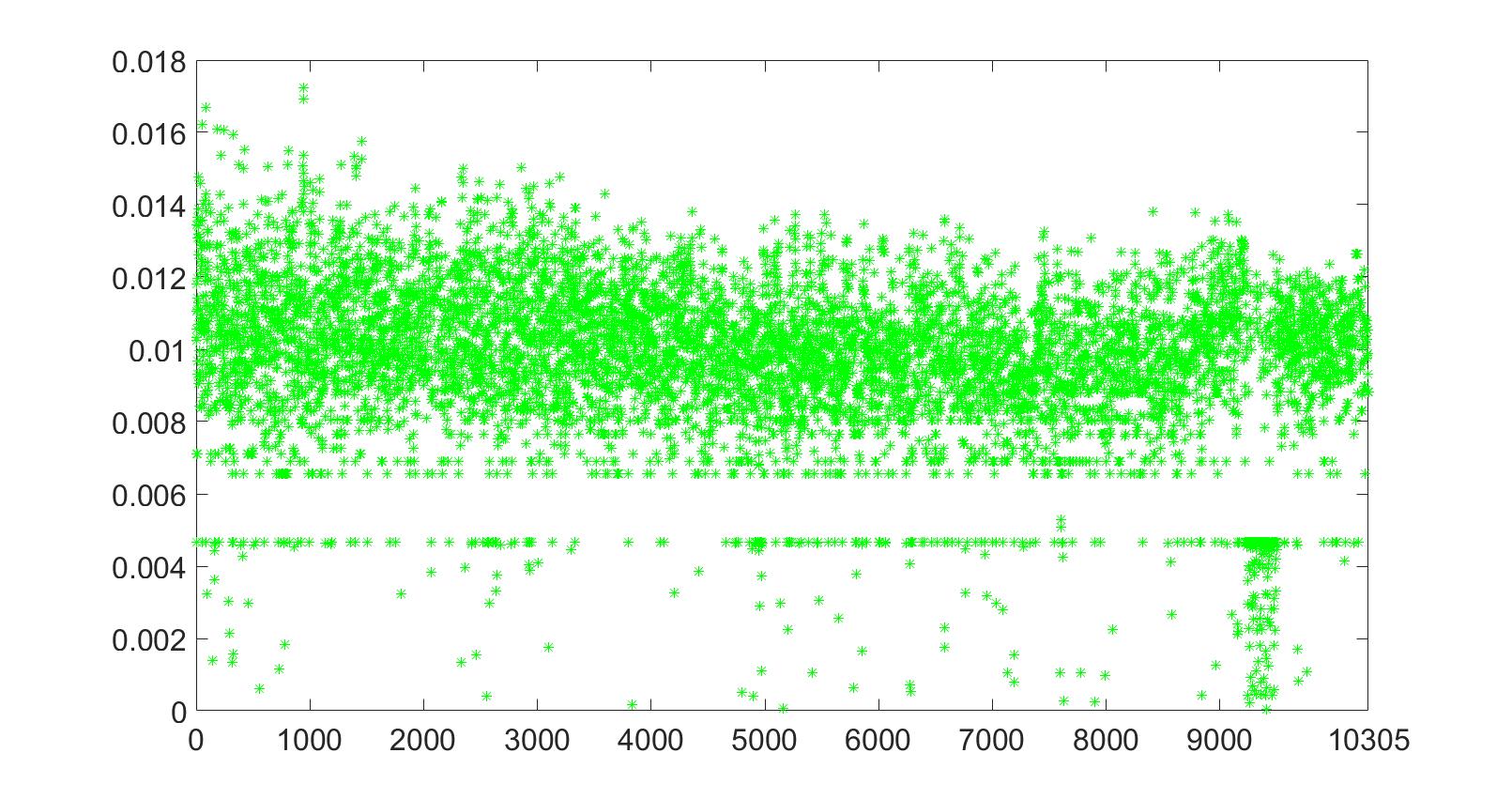}  \\
  (a) $p = 2$  & (b) $p = 12$
  \end{tabular}
\end{center}
  \caption{Optimal points.}\label{OptiPoi}
\end{figure}

\section{Conclusions}
We convert the $p$-norm constraint in $p$-spectral radius problem into an orthogonal constraint,  and propose a first order iterative algorithm CSRH for solving it. In this method, it is feasible to obtain a proper step length  to satisfy the Wolfe conditions under the curvilinear line search.  Convergence analysis shows that the CSRH method is globally convergent. The iterates converges to a $p$-optimal weighting.  Numerical experiments show that CSRH method is efficient and powerful.
In the author ranking application problem, we construct a weighted hypergraph with millions of edges. By computing $p$-spectral radius of this hypergraph, the  most influential cooperation group and the top ten ranked authors are presented.


\begin{thebibliography}{69}
\providecommand{\natexlab}[1]{#1}
\providecommand{\url}[1]{\texttt{#1}}
\expandafter\ifx\csname urlstyle\endcsname\relax
  \providecommand{\doi}[1]{doi: #1}\else
  \providecommand{\doi}{doi: \begingroup \urlstyle{rm}\Url}\fi

\bibitem[Absil et~al.(2005)Absil, Mahony, and Andrews]{absil2005convergence}
P.-A. Absil, R.~Mahony, and B.~Andrews.
\newblock Convergence of the iterates of descent methods for analytic cost
  functions.
\newblock \emph{SIAM J. Optim.}, 16\penalty0 (2):\penalty0 531--547, 2005.

\bibitem[Agarwal et~al.(2005)Agarwal, Lim, Zelnik-Manor, Perona, Kriegman, and
  Belongie]{agarwal2005beyond}
S.~Agarwal, J.~Lim, L.~Zelnik-Manor, P.~Perona, D.~Kriegman, and S.~Belongie.
\newblock Beyond pairwise clustering.
\newblock In \emph{2005 IEEE Computer Society Conference on Computer Vision and
  Pattern Recognition (CVPR'05)}, volume~2, pages 838--845. IEEE, 2005.

\bibitem[Bader et~al.(2015)Bader, Kolda, et~al.]{TTB_Software}
B.~W. Bader, T.~G. Kolda, et~al.
\newblock Matlab tensor toolbox version 2.6.
\newblock Available online, February 2015.
\newblock URL \url{http://www.sandia.gov/~tgkolda/TensorToolbox/}.

\bibitem[Bolte et~al.(2006)Bolte, Daniilidis, and Lewis]{bolte2007lojasiewicz}
J.~Bolte, A.~Daniilidis, and A.~Lewis.
\newblock The {{\L}}ojasiewicz inequality for nonsmooth subanalytic functions
  with applications to subgradient dynamical systems.
\newblock \emph{SIAM J. Optim.}, 17\penalty0 (4):\penalty0 1205--1223, 2006.

\bibitem[Bretto and Gillibert(2005)]{bretto2005hypergraph}
A.~Bretto and L.~Gillibert.
\newblock Hypergraph-based image representation.
\newblock In \emph{International Workshop on Graph-Based Representations in
  Pattern Recognition}, pages 1--11. Springer, 2005.

\bibitem[Brown and Simonovits(1984)]{brown1984digraph}
W.~Brown and M.~Simonovits.
\newblock Digraph extremal problems, hypergraph extremal problems, and the
  densities of graph structures.
\newblock \emph{Discrete Math.}, 48\penalty0 (2-3):\penalty0 147--162, 1984.

\bibitem[Caraceni(2011)]{caraceni2011lagrangians}
A.~Caraceni.
\newblock Lagrangians of hypergraphs, 2011.
\newblock URL
  \url{http://alessandracaraceni.altervista.org/MyWordpress/wp-content/uploads%
/2014/05/Hypergraph_Lagrangians.pdf}.
\newblock [Online; accessed 26-January-2017].

\bibitem[Chang et~al.(2016)Chang, Chen, and Qi]{chang2016computing}
J.~Chang, Y.~Chen, and L.~Qi.
\newblock Computing eigenvalues of large scale sparse tensors arising from a
  hypergraph.
\newblock \emph{SIAM J. Sci. Comput.}, 38\penalty0 (6):\penalty0 A3618--A3643,
  2016.

\bibitem[Chang et~al.(2008)Chang, Pearson, and Zhang]{chang2008perron}
K.~C. Chang, K.~Pearson, and T.~Zhang.
\newblock Perron-{F}robenius theorem for nonnegative tensors.
\newblock \emph{Commun. Math. Sci.}, 6\penalty0 (2):\penalty0 507--520, 2008.

\bibitem[Chen et~al.(2016{\natexlab{a}})Chen, Han, and Zhou]{chen2016computing}
L.~Chen, L.~Han, and L.~Zhou.
\newblock Computing tensor eigenvalues via homotopy methods.
\newblock \emph{SIAM J. Matrix Anal. Appl.}, 37\penalty0 (1):\penalty0
  290--319, 2016{\natexlab{a}}.

\bibitem[Chen et~al.(2016{\natexlab{b}})Chen, Dai, and Han]{chen2016fiber}
Y.~Chen, Y.-H. Dai, and D.~Han.
\newblock Fiber orientation distribution estimation using a
  {P}eaceman-{R}achford splitting method.
\newblock \emph{SIAM J. Imaging Sci.}, 9\penalty0 (2):\penalty0 573--604,
  2016{\natexlab{b}}.

\bibitem[Chen et~al.(2016{\natexlab{c}})Chen, Qi, and Wang]{chen2015computing}
Y.~Chen, L.~Qi, and Q.~Wang.
\newblock Computing extreme eigenvalues of large scale {H}ankel tensors.
\newblock \emph{J. Sci. Comput.}, 68\penalty0 (2):\penalty0 716--738,
  2016{\natexlab{c}}.

\bibitem[Chen et~al.(2016{\natexlab{d}})Chen, Qi, and Wang]{chen2016positive}
Y.~Chen, L.~Qi, and Q.~Wang.
\newblock Positive semi-definiteness and sum-of-squares property of fourth
  order four dimensional {H}ankel tensors.
\newblock \emph{J. Comput. Appl. Math.}, 302:\penalty0 356--368,
  2016{\natexlab{d}}.

\bibitem[Cooper and Dutle(2012)]{cooper2012spectra}
J.~Cooper and A.~Dutle.
\newblock Spectra of uniform hypergraphs.
\newblock \emph{Linear Algebra Appl.}, 436\penalty0 (9):\penalty0 3268--3292,
  2012.

\bibitem[Cui et~al.(2014)Cui, Dai, and Nie]{cui2014all}
C.-F. Cui, Y.-H. Dai, and J.~Nie.
\newblock All real eigenvalues of symmetric tensors.
\newblock \emph{SIAM J. Matrix Anal. Appl.}, 35\penalty0 (4):\penalty0
  1582--1601, 2014.

\bibitem[Ding et~al.(2002)Ding, He, Husbands, Zha, and Simon]{ding2002pagerank}
C.~Ding, X.~He, P.~Husbands, H.~Zha, and H.~D. Simon.
\newblock Pagerank, hits and a unified framework for link analysis.
\newblock In \emph{Proceedings of the 25th annual international ACM SIGIR
  conference on Research and development in information retrieval}, pages
  353--354. ACM, 2002.

\bibitem[Ding et~al.(2015)Ding, Qi, and Wei]{ding2015fast}
W.~Ding, L.~Qi, and Y.~Wei.
\newblock Fast {H}ankel tensor-vector product and its application to
  exponential data fitting.
\newblock \emph{Numer. Linear Algebra Appl.}, 22\penalty0 (5):\penalty0
  814--832, 2015.

\bibitem[Ducournau et~al.(2009)Ducournau, Rital, Bretto, and
  Laget]{ducournau2009multilevel}
A.~Ducournau, S.~Rital, A.~Bretto, and B.~Laget.
\newblock A multilevel spectral hypergraph partitioning approach for color
  image segmentation.
\newblock In \emph{Signal and Image Processing Applications (ICSIPA), 2009 IEEE
  International Conference on}, pages 419--424. IEEE, 2009.

\bibitem[Ducournau et~al.(2012)Ducournau, Bretto, Rital, and
  Laget]{ducournau2012reductive}
A.~Ducournau, A.~Bretto, S.~Rital, and B.~Laget.
\newblock A reductive approach to hypergraph clustering: An application to
  image segmentation.
\newblock \emph{Pattern Recognition}, 45\penalty0 (7):\penalty0 2788--2803,
  2012.

\bibitem[Erd\"os and Stone(1946)]{erdos1946structure}
P.~Erd\"os and A.~H. Stone.
\newblock On the structure of linear graphs.
\newblock \emph{Bull. Amer. Math. Soc.}, 52:\penalty0 1087--1091, 1946.

\bibitem[Frankl and F\"uredi(1989)]{frankl1989extremal}
P.~Frankl and Z.~F\"uredi.
\newblock Extremal problems whose solutions are the blowups of the small
  {W}itt-designs.
\newblock \emph{J. Combin. Theory Ser. A}, 52\penalty0 (1):\penalty0 129--147,
  1989.

\bibitem[Frankl and R{\"o}dl(1984)]{frankl1984hypergraphs}
P.~Frankl and V.~R{\"o}dl.
\newblock Hypergraphs do not jump.
\newblock \emph{Combinatorica}, 4\penalty0 (2-3):\penalty0 149--159, 1984.

\bibitem[Frankl et~al.(2007)Frankl, Peng, R\"odl, and Talbot]{frankl2007note}
P.~Frankl, Y.~Peng, V.~R\"odl, and J.~Talbot.
\newblock A note on the jumping constant conjecture of {E}rd{\H {o}}s.
\newblock \emph{J. Combin. Theory Ser. B}, 97\penalty0 (2):\penalty0 204--216,
  2007.

\bibitem[Gunopulos et~al.(1997)Gunopulos, Mannila, Khardon, and
  Toivonen]{gunopulos1997data}
D.~Gunopulos, H.~Mannila, R.~Khardon, and H.~Toivonen.
\newblock Data mining, hypergraph transversals, and machine learning.
\newblock In \emph{Proceedings of the sixteenth ACM SIGACT-SIGMOD-SIGART
  symposium on Principles of database systems}, pages 209--216. ACM, 1997.

\bibitem[Hager and Zhang(2005)]{hager2005new}
W.~W. Hager and H.~Zhang.
\newblock A new conjugate gradient method with guaranteed descent and an
  efficient line search.
\newblock \emph{SIAM J. Optim.}, 16\penalty0 (1):\penalty0 170--192, 2005.

\bibitem[Hager and Zhang(2006)]{hager2006survey}
W.~W. Hager and H.~Zhang.
\newblock A survey of nonlinear conjugate gradient methods.
\newblock \emph{Pac. J. Optim.}, 2\penalty0 (1):\penalty0 35--58, 2006.

\bibitem[Hu and Qi(2015)]{hu2015laplacian}
S.~Hu and L.~Qi.
\newblock The {L}aplacian of a uniform hypergraph.
\newblock \emph{J. Comb. Optim.}, 29\penalty0 (2):\penalty0 331--366, 2015.

\bibitem[Huang et~al.(2010)Huang, Liu, Zhang, and Metaxas]{huang2010image}
Y.~Huang, Q.~Liu, S.~Zhang, and D.~N. Metaxas.
\newblock Image retrieval via probabilistic hypergraph ranking.
\newblock In \emph{Computer Vision and Pattern Recognition (CVPR), 2010 IEEE
  Conference on}, pages 3376--3383. IEEE, 2010.

\bibitem[Kang et~al.(2015)Kang, Nikiforov, and Yuan]{kang2015p}
L.~Kang, V.~Nikiforov, and X.~Yuan.
\newblock The {$p$}-spectral radius of {$k$}-partite and {$k$}-chromatic
  uniform hypergraphs.
\newblock \emph{Linear Algebra Appl.}, 478:\penalty0 81--107, 2015.

\bibitem[Karypis et~al.(1999)Karypis, Aggarwal, Kumar, and
  Shekhar]{karypis1999multilevel}
G.~Karypis, R.~Aggarwal, V.~Kumar, and S.~Shekhar.
\newblock Multilevel hypergraph partitioning: applications in {VLSI} domain.
\newblock \emph{IEEE T. VLSI Syst.}, 7\penalty0 (1):\penalty0 69--79, 1999.

\bibitem[Keevash(2011)]{keevash2011hypergraph}
P.~Keevash.
\newblock Hypergraph {T}ur{\'a}n problems.
\newblock \emph{Surv. combinatorics}, 392:\penalty0 83--140, 2011.

\bibitem[Keevash et~al.(2014)Keevash, Lenz, and Mubayi]{keevash2014spectral}
P.~Keevash, J.~Lenz, and D.~Mubayi.
\newblock Spectral extremal problems for hypergraphs.
\newblock \emph{SIAM J. Discrete Math.}, 28\penalty0 (4):\penalty0 1838--1854,
  2014.

\bibitem[Klamt et~al.(2009)Klamt, Haus, and Theis]{klamt2009hypergraphs}
S.~Klamt, U.-U. Haus, and F.~Theis.
\newblock Hypergraphs and cellular networks.
\newblock \emph{PLoS Comput. Biol.}, 5\penalty0 (5):\penalty0 e1000385, 6,
  2009.

\bibitem[Kolda and Mayo(2011)]{kolda2011shifted}
T.~G. Kolda and J.~R. Mayo.
\newblock Shifted power method for computing tensor eigenpairs.
\newblock \emph{SIAM J. Matrix Anal. Appl.}, 32\penalty0 (4):\penalty0
  1095--1124, 2011.

\bibitem[Kolda and Mayo(2014)]{kolda2014adaptive}
T.~G. Kolda and J.~R. Mayo.
\newblock An adaptive shifted power method for computing generalized tensor
  eigenpairs.
\newblock \emph{SIAM J. Matrix Anal. Appl.}, 35\penalty0 (4):\penalty0
  1563--1581, 2014.

\bibitem[Kolda et~al.(2005)Kolda, Bader, and Kenny]{kolda2005higher}
T.~G. Kolda, B.~W. Bader, and J.~P. Kenny.
\newblock Higher-order web link analysis using multilinear algebra.
\newblock In \emph{Fifth IEEE International Conference on Data Mining
  (ICDM'05)}, pages 8--pp. IEEE, 2005.

\bibitem[Konstantinova and Skorobogatov(2001)]{konstantinova2001application}
E.~V. Konstantinova and V.~A. Skorobogatov.
\newblock Application of hypergraph theory in chemistry.
\newblock \emph{Discrete Math.}, 235\penalty0 (1-3):\penalty0 365--383, 2001.
\newblock Combinatorics (Prague, 1998).

\bibitem[Krohn-Grimberghe et~al.(2012)Krohn-Grimberghe, Drumond, Freudenthaler,
  and Schmidt-Thieme]{krohn2012multi}
A.~Krohn-Grimberghe, L.~Drumond, C.~Freudenthaler, and L.~Schmidt-Thieme.
\newblock Multi-relational matrix factorization using {B}ayesian personalized
  ranking for social network data.
\newblock In \emph{Proceedings of the fifth ACM international conference on Web
  search and data mining}, pages 173--182. ACM, 2012.

\bibitem[Li et~al.(2016)Li, Shao, and Qi]{li2015extremal}
H.~Li, J.-Y. Shao, and L.~Qi.
\newblock The extremal spectral radii of {$k$}-uniform supertrees.
\newblock \emph{J. Comb. Optim.}, 32\penalty0 (3):\penalty0 741--764, 2016.

\bibitem[Li et~al.(2014)Li, Hu, Shen, Dick, and Zhang]{li2014context}
X.~Li, W.~Hu, C.~Shen, A.~Dick, and Z.~Zhang.
\newblock Context-aware hypergraph construction for robust spectral clustering.
\newblock \emph{IEEE T. Knowl. Data En.}, 26\penalty0 (10):\penalty0
  2588--2597, 2014.

\bibitem[Liu et~al.(2013)Liu, Shao, Xiao, Wu, and Zhuang]{liu2013hypergraph}
Y.~Liu, J.~Shao, J.~Xiao, F.~Wu, and Y.~Zhuang.
\newblock Hypergraph spectral hashing for image retrieval with heterogeneous
  social contexts.
\newblock \emph{Neurocomputing}, 119:\penalty0 49--58, 2013.

\bibitem[Lu and Man(2016)]{lu2016connected}
L.~Lu and S.~Man.
\newblock Connected hypergraphs with small spectral radius.
\newblock \emph{Linear Algebra Appl.}, 509:\penalty0 206--227, 2016.

\bibitem[Michoel and Nachtergaele(2012)]{michoel2012alignment}
T.~Michoel and B.~Nachtergaele.
\newblock Alignment and integration of complex networks by hypergraph-based
  spectral clustering.
\newblock \emph{Physical Review E}, 86\penalty0 (5):\penalty0 056111, 2012.

\bibitem[Motzkin and Straus(1965)]{motzkin1965maxima}
T.~S. Motzkin and E.~G. Straus.
\newblock Maxima for graphs and a new proof of a theorem of {T}ur\'an.
\newblock \emph{Canad. J. Math.}, 17:\penalty0 533--540, 1965.

\bibitem[Mubayi(2006)]{mubayi2006hypergraph}
D.~Mubayi.
\newblock A hypergraph extension of {T}ur\'an's theorem.
\newblock \emph{J. Combin. Theory Ser. B}, 96\penalty0 (1):\penalty0 122--134,
  2006.

\bibitem[Ng et~al.(2009)Ng, Qi, and Zhou]{ng2009finding}
M.~Ng, L.~Qi, and G.~Zhou.
\newblock Finding the largest eigenvalue of a nonnegative tensor.
\newblock \emph{SIAM J. Matrix Anal. Appl.}, 31\penalty0 (3):\penalty0
  1090--1099, 2009.

\bibitem[Ng et~al.(2011)Ng, Li, and Ye]{ng2011multirank}
M.~K.-P. Ng, X.~Li, and Y.~Ye.
\newblock Multirank: co-ranking for objects and relations in multi-relational
  data.
\newblock In \emph{Proceedings of the 17th ACM SIGKDD international conference
  on Knowledge discovery and data mining}, pages 1217--1225. ACM, 2011.

\bibitem[Nikiforov(2007)]{nikiforov2007bounds}
V.~Nikiforov.
\newblock Bounds on graph eigenvalues. {II}.
\newblock \emph{Linear Algebra Appl.}, 427\penalty0 (2-3):\penalty0 183--189,
  2007.

\bibitem[Nikiforov(2013)]{nikiforov2013analytic}
V.~Nikiforov.
\newblock An analytic theory of extremal hypergraph problems.
\newblock \emph{arXiv preprint arXiv:1305.1073}, 2013.

\bibitem[Nikiforov(2014)]{nikiforov2014analytic}
V.~Nikiforov.
\newblock Analytic methods for uniform hypergraphs.
\newblock \emph{Linear Algebra Appl.}, 457:\penalty0 455--535, 2014.

\bibitem[Nocedal and Wright(2006)]{nocedal2006numerical}
J.~Nocedal and S.~J. Wright.
\newblock \emph{Numerical optimization}.
\newblock Springer Series in Operations Research and Financial Engineering.
  Springer, New York, second edition, 2006.

\bibitem[Page et~al.(1999)Page, Brin, Motwani, and Winograd]{page1999pagerank}
L.~Page, S.~Brin, R.~Motwani, and T.~Winograd.
\newblock The pagerank citation ranking: bringing order to the web.
\newblock 1999.

\bibitem[Pearson and Zhang(2014)]{pearson2014spectral}
K.~J. Pearson and T.~Zhang.
\newblock On spectral hypergraph theory of the adjacency tensor.
\newblock \emph{Graphs Combin.}, 30\penalty0 (5):\penalty0 1233--1248, 2014.

\bibitem[Peng(2008)]{peng2008using}
Y.~Peng.
\newblock Using {L}agrangians of hypergraphs to find non-jumping numbers. {I}.
\newblock \emph{Ann. Comb.}, 12\penalty0 (3):\penalty0 307--324, 2008.

\bibitem[Peng and Zhao(2008)]{peng2008generating}
Y.~Peng and C.~Zhao.
\newblock Generating non-jumping numbers recursively.
\newblock \emph{Discrete Appl. Math.}, 156\penalty0 (10):\penalty0 1856--1864,
  2008.

\bibitem[Pliakos and Kotropoulos(2015)]{pliakos2015weight}
K.~Pliakos and C.~Kotropoulos.
\newblock Weight estimation in hypergraph learning.
\newblock In \emph{2015 IEEE International Conference on Acoustics, Speech and
  Signal Processing (ICASSP)}, pages 1161--1165. IEEE, 2015.

\bibitem[Qi(2005)]{qi2005eigenvalues}
L.~Qi.
\newblock Eigenvalues of a real supersymmetric tensor.
\newblock \emph{J. Symbolic Comput.}, 40\penalty0 (6):\penalty0 1302--1324,
  2005.

\bibitem[Qi and Luo(2017)]{qi2016tensor}
L.~Qi and Z.~Luo.
\newblock \emph{Tensor Analysis: Spectral Theory and Special Tensors}.
\newblock Society for Industrial and Applied Mathematics, 2017.

\bibitem[Rohe et~al.(2011)Rohe, Chatterjee, and Yu]{rohe2011spectral}
K.~Rohe, S.~Chatterjee, and B.~Yu.
\newblock Spectral clustering and the high-dimensional stochastic blockmodel.
\newblock \emph{Ann. Statist.}, 39\penalty0 (4):\penalty0 1878--1915, 2011.

\bibitem[Sidorenko(1987)]{sidorenko1987maximal}
A.~F. Sidorenko.
\newblock The maximal number of edges in a homogeneous hypergraph containing no
  prohibited subgraphs.
\newblock \emph{Math. Notes}, 41\penalty0 (3):\penalty0 247--259, 1987.

\bibitem[Sun et~al.(2008)Sun, Ji, and Ye]{sun2008hypergraph}
L.~Sun, S.~Ji, and J.~Ye.
\newblock Hypergraph spectral learning for multi-label classification.
\newblock In \emph{Proceedings of the 14th ACM SIGKDD international conference
  on Knowledge discovery and data mining}, pages 668--676. ACM, 2008.

\bibitem[Talbot(2002)]{talbot2002lagrangians}
J.~Talbot.
\newblock Lagrangians of hypergraphs.
\newblock \emph{Combin. Probab. Comput.}, 11\penalty0 (2):\penalty0 199--216,
  2002.

\bibitem[Tur\'an(1941)]{turan1941eine}
P.~Tur\'an.
\newblock Eine {E}xtremalaufgabe aus der {G}raphentheorie.
\newblock \emph{Mat. Fiz. Lapok}, 48:\penalty0 436--452, 1941.

\bibitem[Tur{\'a}n(1961)]{turan1961research}
P.~Tur{\'a}n.
\newblock Research problems.
\newblock \emph{MTA Mat. Kutat{\'o} Int. K{\"o}zl}, 6:\penalty0 417--423, 1961.

\bibitem[Xie and Chang(2013)]{xie2013z}
J.~Xie and A.~Chang.
\newblock On the {Z}-eigenvalues of the adjacency tensors for uniform
  hypergraphs.
\newblock \emph{Linear Algebra Appl.}, 439\penalty0 (8):\penalty0 2195--2204,
  2013.

\bibitem[Xu and Yin(2013)]{yang2013ablock}
Y.~Xu and W.~Yin.
\newblock A block coordinate descent method for regularized multiconvex
  optimization with applications to nonnegative tensor factorization and
  completion.
\newblock \emph{SIAM J. Imaging Sci.}, 6\penalty0 (3):\penalty0 1758--1789,
  2013.

\bibitem[Yue et~al.(2016)Yue, Zhang, and Lu]{yue2016largest}
J.~Yue, L.~Zhang, and M.~Lu.
\newblock Largest adjacency, signless {L}aplacian, and {L}aplacian
  {H}-eigenvalues of loose paths.
\newblock \emph{Front. Math. China}, 11\penalty0 (3):\penalty0 623--645, 2016.

\bibitem[Zhou et~al.(2006)Zhou, Huang, and Sch{\"o}lkopf]{zhou2006learning}
D.~Zhou, J.~Huang, and B.~Sch{\"o}lkopf.
\newblock Learning with hypergraphs: Clustering, classification, and embedding.
\newblock In \emph{Advances in neural information processing systems}, pages
  1601--1608, 2006.

\bibitem[Zhuang et~al.(2011)Zhuang, Liu, Wu, Zhang, and
  Shao]{zhuang2011hypergraph}
Y.~Zhuang, Y.~Liu, F.~Wu, Y.~Zhang, and J.~Shao.
\newblock Hypergraph spectral hashing for similarity search of social image.
\newblock In \emph{Proceedings of the 19th ACM international conference on
  Multimedia}, pages 1457--1460. ACM, 2011.

\end{thebibliography}

\end{document}